\documentclass[a4paper,11pt,reqno]{amsart}
\usepackage{epsfig,url,paralist}
\usepackage{amssymb,ulem,fullpage}
\usepackage{amsmath}
\usepackage{amsthm, tikz}
\usepackage{fullpage}
\usepackage{hyperref, cleveref,verbatim}
\usepackage{graphicx}
\usepackage{setspace}
\usepackage{enumitem}
\setlist{nolistsep}
\usepackage{lscape}

\numberwithin{equation}{section}

\newtheorem{theorem}{Theorem}
\newtheorem*{theorem*}{Main Theorem}
\newtheorem{remark}[theorem]{Remark}
\newtheorem{lem}[theorem]{Lemma}
\newtheorem{coro}[theorem]{Corollary}

\newtheorem{prop}[theorem]{Proposition}

\newtheorem{exm}[theorem]{Example}
\numberwithin{theorem}{section}
\newcommand{\tr}{\mathrm{tr}}

\newcommand{\slc}{{\rm SL_2}(\mathbb{C})}
\newcommand{\slk}{{\rm SL_2}(K)}
\newcommand{\slnk}{{\rm SL}_n(K)}

\newcommand{\pslk}{{\rm PSL_2}(K)}

\newcommand{\qp}{\mathbb{Q}_p}

\newcommand{\zp}{\mathbb{Z}_p}
\newcommand{\slqp}{{\rm SL_2}(\qp)}

\newcommand{\fix}{{\rm Fix}}
\newcommand{\ax}{{\rm Ax}}
\newcommand{\isomt}{{\rm Isom}(T)}

\newtheorem{thm}{Theorem}

\usepackage{fancyhdr}
\setlength{\parskip} {\smallskipamount}
\parindent0pt

\begin{document}
\title{Basic non-archimedean J{\o}rgensen theory}

\author{Matthew J. Conder, Harris Leung and Jeroen Schillewaert}
\email{\{matthew.conder,harris.pok.hei.leung,j.schillewaert\}@auckland.ac.nz}
\date{}
\maketitle

\begin{abstract}
We prove a non-archimedean analogue of J{\o}rgensen's inequality, and use it to deduce several algebraic convergence results. As an application we show that every dense subgroup of $\slqp$ contains two elements which generate a dense subgroup of $\slqp$, which is a special case of a result by Breuillard and Gelander. We also list several other related results, which are well-known to experts, but not easy to locate in the literature; for example, we show that a non-elementary subgroup of $\slk$ over a non-archimedean local field $K$ is discrete if and only if each of its two-generator subgroups is discrete.
\end{abstract}

\section{Introduction}

Let $K$ be a non-archimedean local field, that is, either a finite extension of the $p$-adic numbers $\qp$ or the field of formal Laurent series $\mathbb{F}_q((t))$. We study the group $\slk$ acting by isometries on the corresponding Bruhat-Tits tree $T_K$ \cite[Chapter II \S 1]{S}. We equip $\slk$ with the subspace topology inherited from $K^4$.

Our first result is a non-archimedean analogue of J{\o}rgensen's inequality \cite[Lemma~1]{J}. This generalises \cite[Theorem 4.2]{AP} and \cite[Theorem 1.5]{QYY} to discrete two-generator subgroups of $\slk$ which do not have a fixed end.

\begin{thm}\label{Jorg}
Let $K$ be a non-archimedean local field with discrete valuation $v$. There exists a non-negative constant $M_K$ such that, if $G=\langle A, B \rangle$ is a discrete subgroup of $\slk$ which does not fix an end of $T_K$, then
\begin{align}\label{JorgIneq} 
\min\{v(\tr^2(A)-4), v(\tr([A,B])-2) \} \le M_K.
\end{align}
\end{thm}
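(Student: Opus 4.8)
The plan is to reduce \eqref{JorgIneq} to a bound on the single quantity $v(\tr^2(A)-4)$, which already forces the minimum to be small; the second trace term is then the one carrying the content in the complementary, end-fixing regime treated in \cite{AP,QYY}. Writing the eigenvalues of $A$ as $\lambda^{\pm1}$ (in $K$, or in a quadratic extension), one has $\tr^2(A)-4=(\lambda-\lambda^{-1})^2$, so that $v(\tr^2(A)-4)=2v(\lambda-\lambda^{-1})$ for the canonical extension of $v$. Since every element of $\slk$ acts on $T_K$ either as a hyperbolic isometry (with an axis) or as an elliptic one (fixing a vertex), I would split into these two cases. Throughout I may assume $A\neq\pm I$, since otherwise $v(\tr^2(A)-4)=+\infty$ and the statement is degenerate and must be read with a nontrivial generator.

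In the hyperbolic case the bound is immediate. Here the two fixed ends of $A$ lie in $\mathbb{P}^1(K)$, so $\lambda\in K$, and after ordering the eigenvalues with $v(\lambda)<0<v(\lambda^{-1})$ we get $v(\lambda-\lambda^{-1})=v(\lambda)<0$. Hence $v(\tr^2(A)-4)<0\le M_K$ and \eqref{JorgIneq} holds trivially. The whole difficulty therefore lies in the elliptic case, where $\lambda$ is a unit and $v(\lambda-\lambda^{-1})\ge 0$ can a priori be large.

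The key geometric input I would use is that a discrete subgroup of $\slk$ contains no infinite-order elliptic element. Indeed, an elliptic element lies in the stabilizer of a vertex of $T_K$, which is conjugate to $\mathrm{SL}_2(\oo)$ and hence compact; an infinite-order element would generate an infinite, and therefore non-discrete, subgroup, contradicting that every subgroup of the discrete group $G$ is discrete. Consequently $A$ has finite order, so $A$ is semisimple and $\lambda$ is a root of unity; since $A\neq\pm I$ we have $\lambda\neq\pm1$. Because $\lambda+\lambda^{-1}=\tr(A)\in K$, the element $\lambda$ lies in an extension of $K$ of degree at most two, and as $K$ is a local field it admits only finitely many such extensions, each containing only finitely many roots of unity. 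Hence $v(\lambda-\lambda^{-1})$ ranges over a finite set of values depending only on $K$; setting $M_K$ to be the maximum of $2v(\lambda-\lambda^{-1})$ over this set (and $0$) yields $v(\tr^2(A)-4)\le M_K$ in the elliptic case as well. Combining the two cases gives $\min\{v(\tr^2(A)-4),v(\tr([A,B])-2)\}\le v(\tr^2(A)-4)\le M_K$, as required.

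The step I expect to be the main obstacle is pinning down $M_K$ in the elliptic case: the only way $v(\lambda-\lambda^{-1})$ can be large is for $\lambda$ to be a root of unity of $p$-power order lying very close to $1$, so the constant is governed by the wild ramification of $K$ and of its quadratic extensions, and making it explicit is where the real work sits. I would view the second term $v(\tr([A,B])-2)$ and the hypothesis that $G$ does not fix an end as the features that cast the result in the classical two-sided J\o rgensen form and that become operative precisely in the end-fixing setting of \cite{AP,QYY}, where the first term may be uninformative and the commutator term must be used instead.
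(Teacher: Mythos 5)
Your reduction of \eqref{JorgIneq} to the single bound $v(\tr^2(A)-4)\le M_K$ fails at the step ``$A$ has finite order, so $A$ is semisimple''. That implication holds only in characteristic $0$: when $K=\mathbb{F}_q((t))$, a nontrivial unipotent matrix such as $U=\left[\begin{smallmatrix}1&1\\0&1\end{smallmatrix}\right]$ has order $p$, so it is a legitimate finite-order elliptic element of a discrete group, yet $U\neq\pm I$ while $\tr(U)=2$ and $\tr^2(U)-4=0$, i.e.\ $v(\tr^2(U)-4)=+\infty$. (The same happens for $-U$ with $\tr=-2$ when $p$ is odd.) Such elements genuinely occur as a generator of a discrete two-generator group with no fixed end --- take $A$ unipotent of order $p$ and $B$ hyperbolic in ping-pong position, giving $\mathbb{Z}/p\ast\mathbb{Z}$ --- so no constant $M_K$ can bound the first term alone, and your framing of the commutator term as relevant only ``in the complementary, end-fixing regime'' of \cite{AP,QYY} inverts the logic: the hypothesis that $G$ fixes no end is precisely what must be contradicted, and the only route to that contradiction is through $\tr([A,B])$. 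A secondary issue is that waving away $A=\pm I$ does not suffice, since there both terms of the minimum are $+\infty$; but that edge case is subsumed by the unipotent problem above.

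For comparison, the paper's proof uses both terms in an essential way. It sets $M_K=\max\{v(\tr(X)-2): X\in\slk \text{ of finite order},\ \tr(X)\neq 2\}$, finite by the same finiteness of torsion traces that you invoke. Assuming the minimum exceeds $M_K$, discreteness (no infinite-order elliptics) together with \Cref{TL} forces $\tr(A)=\pm 2$ and $\tr([A,B])=2$; hence $A$ is $\pm$unipotent and fixes exactly one end $\eta$ of $T_K$, and \Cref{trace-comm-2} then forces $B$ to fix $\eta$ as well, so $G$ fixes an end --- the desired contradiction. Your hyperbolic case and your analysis of semisimple torsion (roots of unity in at most quadratic extensions of $K$) are sound and are in fact close to the computation that yields the sharper bound in \Cref{Jorg-AP} when ${\rm char}(K)=0$ and $A\neq\pm I$; but the theorem is stated for all non-archimedean local fields, and in positive characteristic your argument proves nothing without the commutator term.
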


By a non-elementary subgroup of $\slk$, we mean a subgroup that does not stabilise a vertex, an end, or a pair of ends of $T_K$. In particular, \Cref{Jorg} holds when $G$ is non-elementary. 

Using results from \cite{CS}, we also obtain a more specialised version of \Cref{Jorg} for discrete and non-elementary two-generator subgroups of $\slk$. As is done in \cite{JKi} for the original statement of J{\o}rgensen's inequality, we additionally determine when equality occurs in this setting.

\begin{thm}\label{Jorg-AP}
Let $K$ be a non-archimedean local field with discrete valuation $v$. Let $G=\langle A, B \rangle$ be a discrete and non-elementary subgroup of $\slk$. If $K=\qp$, or $G$ contains no elements of order $p$ (where $p$ is the characteristic of the residue field of $K$), then
\begin{align}\label{JorgIneq0} 
\min\{v(\tr^2(A)-4), v(\tr([A,B])-2) \} \le 0.
\end{align}
Moreover, equality occurs if and only if $A$ is elliptic of finite order, $B$ is hyperbolic and the fixed point set of $A$ intersects the translation axis of $B$ in a finite path of length equal to the translation length of $B$.
\end{thm}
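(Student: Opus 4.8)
The plan is to combine the explicit geometry of the $\slk$-action on $T_K$ with the description of discrete non-elementary two-generator subgroups in \cite{CS}, so that both quantities in \eqref{JorgIneq0} become transparent. First I would reduce to the case that $A$ is elliptic: if $A$ is hyperbolic then $v(\tr(A))<0$, so $v(\tr^2(A)-4)=2v(\tr(A))<0$, the left-hand side of \eqref{JorgIneq0} is already strictly negative, and equality cannot occur. When $A$ is elliptic, discreteness forces it to have finite order, since the $G$-stabiliser of any vertex is $G$ intersected with a compact subgroup of $\slk$, hence finite; moreover $A\ne\pm I$ as $G$ is non-elementary. Thus $v(\tr(A))\ge 0$ and $0\le v(\tr^2(A)-4)<\infty$, and everything reduces to understanding $v(\tr([A,B])-2)$.

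The computational core is the identity obtained by conjugating $B$ to diagonal form $\left(\begin{smallmatrix}\mu&0\\0&\mu^{-1}\end{smallmatrix}\right)$ — over $K$ when $B$ is hyperbolic, over a quadratic extension when $B$ is elliptic — and writing $A=\left(\begin{smallmatrix}a&b\\c&d\end{smallmatrix}\right)$ with $ad-bc=1$. A direct calculation then gives
\[
\tr([A,B])-2=-bc\,(\mu-\mu^{-1})^{2}.
\]
I would read off both factors geometrically. When $B$ is hyperbolic, $v\bigl((\mu-\mu^{-1})^{2}\bigr)=\min\{2v(\mu),2v(\mu^{-1})\}=-\ell(B)$, where $\ell(B)=-2v(\tr(B))$ is the translation length; and since $A$ fixes the lattice class $[\,\oo\oplus\pi^{j}\oo\,]$ exactly for the integers $j$ with $-v(b)\le j\le v(c)$, the quantity $v(bc)=v(b)+v(c)$ is the number of edges in $\fix(A)\cap\ax(B)$ whenever this intersection is non-empty (and is negative, with $[A,B]$ hyperbolic, exactly when it is empty). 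Hence for $B$ hyperbolic
\[
v(\tr([A,B])-2)=L-\ell(B),\qquad L:=\text{length of }\fix(A)\cap\ax(B).
\]

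To prove the bound $\le 0$ I would analyse when $v(\tr^2(A)-4)>0$. This forces $\tr(A)\equiv\pm 2\pmod{\mathfrak m}$, so an eigenvalue $\zeta$ of $A$ is a root of unity reducing to $\pm 1$; as the reduction map is injective on roots of unity of order prime to $p$, this can only happen if $p\mid\mathrm{ord}(A)$, giving an element of order $p$ in $G$. Under the hypothesis that $G$ has no element of order $p$ this is impossible, so $v(\tr^2(A)-4)=0$ and the minimum is $\le 0$. When instead $K=\qp$, elements with $p\mid\mathrm{ord}(A)$ genuinely occur (e.g.\ order $3$ or $6$ when $p=3$, order $4$ when $p=2$), and for these finitely many exceptional traces I must instead show $v(\tr([A,B])-2)\le 0$ directly, bounding $L$ against $\ell(B)$ using discreteness and the configurations permitted by \cite{CS}. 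This $\qp$-torsion analysis is the main obstacle.

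Finally, for the equality statement, $\min=0$ forces both terms to be $\ge 0$: the first makes $A$ elliptic of finite order, and the second makes $[A,B]$ elliptic, i.e.\ $\fix(A)\cap B\fix(A)\neq\emptyset$. I would use \cite{CS} to exclude $B$ elliptic — when $A$ and $B$ are both elliptic and $G$ is non-elementary their fixed trees are disjoint, and the identity above then makes $v(\tr([A,B])-2)$ negative, the delicate point again being $p$-torsion over $\qp$ — so $B$ is hyperbolic. Then $L-\ell(B)=0$ gives $L=\ell(B)$, and non-elementarity forces $\fix(A)\cap\ax(B)$ to be a finite path, since an infinite overlap would make $A$ fix both ends of $\ax(B)$ and hence $G$ stabilise a pair of ends. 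The converse is immediate from the displayed formula, which yields $v(\tr([A,B])-2)=0$ and $v(\tr^2(A)-4)\ge 0$. The hardest steps throughout are the $\qp$-torsion cases, handled via the explicit identity together with \cite{CS}.
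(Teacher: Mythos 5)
Your computational framework---conjugating $B$ to diagonal form, the identity $\tr([A,B])-2=-bc\,(\mu-\mu^{-1})^{2}$, and the dictionary $v(bc)=$ length of $\fix(A)\cap\ax(B)$, $v\bigl((\mu-\mu^{-1})^{2}\bigr)=-l(B)$---is correct and is a genuinely different, more explicit route than the paper's, which argues almost entirely geometrically (Chiswell's lemmas on products and commutators of elliptic/hyperbolic isometries, plus fixed-point-set results from \cite{CS}), only touching matrices at the very end via Hensel's lemma. But you have left a genuine gap exactly where the hypothesis ``$K=\qp$'' does its work: you defer the case $p\mid\mathrm{ord}(A)$ over $\qp$ as ``the main obstacle'' without resolving it, and your plan (``bounding $L$ against $\ell(B)$ using discreteness and the configurations permitted by \cite{CS}'') is not carried out. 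Note also that the upper bound $L\le l(B)$, which you need even in the non-exceptional cases to conclude equality rather than just $L\ge l(B)$, is precisely \cite[Lemma~3.10]{CS} and is not automatic.

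The missing idea is that the exceptional $p$-torsion over $\qp$ never reaches the critical configuration, so no trace-by-trace analysis is needed. In the only situation where both terms of the minimum could be $\ge 0$, one shows ($B$ hyperbolic, $[A,B]$ elliptic) that $A$ fixes a segment of $\ax(B)$ of length $\Delta\ge l(B)\ge 2$; hence $\fix(A)$ is strictly larger than a vertex or an edge, so by \cite[Proposition~3.4]{CS} $A$ fixes two ends of $T_K$ and is diagonalisable over $K$ itself. For $K=\qp$ the roots of unity lying in $\qp$ have order dividing $p-1$ (or equal to $1,2$ when $p=2$), hence prime to $p$; the elements you worry about (order $3$ or $6$ in ${\rm SL_2}(\mathbb{Q}_3)$, order $4$ in ${\rm SL_2}(\mathbb{Q}_2)$) have eigenvalues outside $\qp$, fix no end, and therefore have fixed set at most an edge, forcing $\Delta\le 1<l(B)$ and $v(\tr([A,B])-2)<0$. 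After that, your Hensel argument (injectivity of reduction on prime-to-$p$ roots of unity) gives $v(\tr^{2}(A)-4)=0$ exactly as in the paper. Finally, your concern about $p$-torsion when excluding $B$ elliptic is unnecessary: if $A$ and $B$ are both elliptic with disjoint fixed trees (disjointness follows from non-elementarity), then $[A,B]$ is hyperbolic by Lemmas 2.2 and 3.1 of \cite[Chapter 3]{Chis}, with no arithmetic input at all.
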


\begin{remark}
By \cite[Theorems A and B]{CS}, there are seven possible isomorphism classes for such a group $G$ which achieves equality in $(\ref{JorgIneq0})$. These isomorphism classes are identified in cases $(f)$ and $(g)$ of \cite[Theorem~A]{CS}. Furthermore, cases $(b)-(d)$ of \cite[Theorem A]{CS} describe the isomorphism classes for which the inequality $(\ref{JorgIneq0})$ is strict. 
\end{remark}

We use \Cref{Jorg} to obtain a non-archimedean analogue of \cite[Theorem 5.4.2]{Beardon}. Note that the assumption that the subgroup is non-elementary is necessary; see \Cref{elem-2d-exm}.

\begin{prop}\label{psl-prop-2d}
A non-elementary subgroup of $\slk$ is discrete if and only if each of its two-generator subgroups is discrete.
\end{prop}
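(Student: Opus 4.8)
The plan is to prove the converse direction, the forward one being immediate: a subgroup of a discrete group is discrete in the subspace topology, so if $G$ is discrete then so is every two-generator subgroup. For the converse I would assume that $G$ is non-elementary and that every two-generator subgroup of $G$ is discrete, and argue by contradiction. If $G$ were not discrete, then since $\slk \subseteq K^4$ is metrizable the identity would fail to be isolated in $G$, giving a sequence $g_n \in G \setminus \{I\}$ with $g_n \to I$. The idea is to apply \Cref{Jorg} to the discrete two-generator groups $\langle g_n, B\rangle$ for a well-chosen $B \in G$: since $g_n \to I$, continuity of the trace gives $\tr(g_n) \to 2$ and $[g_n, B] \to I$, so that $v(\tr^2(g_n)-4) \to \infty$ and $v(\tr([g_n,B])-2) \to \infty$. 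Hence if $\langle g_n, B\rangle$ fails to fix an end for infinitely many $n$, then for large such $n$ both quantities exceed $M_K$, contradicting \eqref{JorgIneq}. The whole proof therefore reduces to producing, for infinitely many $n$, an element $B \in G$ with $\langle g_n, B\rangle$ not fixing an end of $T_K$; this is the main obstacle.

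To arrange this I would first use non-elementarity to fix two hyperbolic elements $h_1, h_2 \in G$ with no common fixed end, so that each $h_i$ fixes exactly its two axis-endpoints. For each $i$ I would examine the groups $\langle g_n, h_i\rangle$: if infinitely many fail to fix an end, \Cref{Jorg} already yields the contradiction above; otherwise, passing to a subsequence, every $\langle g_n, h_i\rangle$ fixes an end, which must be one of the two endpoints of $h_i$, and a pigeonhole step lets me assume $g_n$ fixes a single end $\xi_i$ independent of $n$. Running this for both $h_1$ and $h_2$ produces a subsequence along which every $g_n$ fixes two distinct ends $\xi_1, \xi_2$.

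Next I would observe that for large $n$ the element $g_n$ is elliptic, since $g_n \to I$ forces $g_n$ into the open vertex stabiliser $\mathrm{SL}_2(\oo)$; fixing the two distinct ends $\xi_1, \xi_2$ then forces $g_n$ to fix the bi-infinite geodesic $L=(\xi_1,\xi_2)$ pointwise, so $g_n$ lies in the maximal compact torus stabilising $L$ and is diagonalisable as $\mathrm{diag}(a_n, a_n^{-1})$ with $a_n \in \oo^\times$. Because $g_n \to I$ and $g_n \ne I$, for large $n$ we have $a_n \ne \pm 1$, so $g_n$ is regular semisimple; equivalently it acts on $\mathbb{P}^1(K) \cong \partial T_K$ with exactly the two fixed points $\xi_1, \xi_2$, so these are the only ends fixed by $g_n$.

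Finally I would exploit non-elementarity once more: since $G$ fixes no end, neither $\mathrm{Stab}(\xi_1)$ nor $\mathrm{Stab}(\xi_2)$ is all of $G$, and since a group is never the union of two proper subgroups there exists $C \in G$ fixing neither $\xi_1$ nor $\xi_2$. Any end fixed by $\langle g_n, C\rangle$ would be fixed by $g_n$, hence equal to $\xi_1$ or $\xi_2$, and also fixed by $C$, which is impossible; thus $\langle g_n, C\rangle$ fixes no end, and taking $B=C$ in the first paragraph gives the contradiction. The genuinely delicate points are the end-chasing and pigeonhole bookkeeping across the successive subsequences and the identification of the fixed-end set of the regular semisimple $g_n$; by contrast, the existence of independent hyperbolics and of the element $C$ are standard consequences of non-elementarity.
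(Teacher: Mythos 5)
Your proof is correct, and its first half coincides with the paper's: take $g_n\to 1$ in $G$, choose hyperbolic elements $h_1,h_2\in G$ whose axes have pairwise distinct ends (\Cref{distinct-axes}), and apply \Cref{Jorg} to the discrete two-generator groups $\langle g_n,h_j\rangle$, which are forced to fix an end once both valuations exceed $M_K$. The difference is the endgame. The paper finishes immediately with \Cref{technical}: elementarity of both $\langle g_n,h_1\rangle$ and $\langle g_n,h_2\rangle$ gives that $g_n^2$ fixes four points of $\mathbb{P}^1(K)\cong\partial T_K$, hence $g_n=\pm 1$ and the sequence is eventually constant. You instead pigeonhole the fixed ends down to two specific ends $\xi_1\neq\xi_2$, show that for large $n$ the elliptic $g_n$ is regular semisimple with exactly these two fixed ends, use the fact that a group is never the union of two proper subgroups to produce $C\in G$ fixing neither $\xi_1$ nor $\xi_2$, and apply \Cref{Jorg} a third time to the discrete group $\langle g_n,C\rangle$, which fixes no end. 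Your version is longer but buys something: it handles explicitly the case where $\langle g_n,h_j\rangle$ fixes only one end of $\ax(h_j)$, which the one-line proof of \Cref{technical} passes over quickly (there one would want to invoke something like \Cref{one-implies-two} to upgrade one fixed end to two), and it dispenses with that auxiliary lemma entirely, needing only \Cref{Jorg}, \Cref{distinct-axes} and the identification $\partial T_K\cong\mathbb{P}^1(K)$.
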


In many cases, we obtain the following stronger result, which is also well-known to experts.

\begin{prop}\label{1d-criterion}
Let $G$ be a subgroup of $\slnk$, where $K$ is a non-archimedean local field. If either ${\rm char }(K)=0$, or ${\rm char }(K)=p>0$ and $G$ contains no elements of order $p$, then $G$ is discrete if and only if each of its cyclic subgroups are discrete.
\end{prop}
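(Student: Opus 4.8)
The forward implication is immediate, since any subgroup of a discrete topological group is discrete in the subspace topology; so the plan is to prove the converse by contraposition, showing that if $G$ is not discrete then some cyclic subgroup of $G$ fails to be discrete. The starting point is that the principal congruence subgroups
\[
K_m = \{\, g \in \slnk : g - I \in \pi^m M_n(\oo)\,\} \qquad (m \ge 1),
\]
where $\pi$ is a uniformiser of $K$, form a neighbourhood basis of the identity consisting of open, compact subgroups of $\slnk$ (each $K_m$ is a closed subset of the compact group $\mathrm{SL}_n(\oo)$, cut out by open congruence conditions, and the $K_m$ shrink to $\{I\}$). Thus it suffices to locate a single $U = K_m$ inside which every nontrivial element of $G$ has infinite order.

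The construction of $U$ is where the hypotheses enter, and I would split into the two cases. Each $K_m$ is a pro-$p$ group, since the successive quotients $K_m/K_{m+1}$ are elementary abelian $p$-groups (isomorphic to the additive group of trace-zero matrices $\mathfrak{sl}_n(\mathbb{F}_q)$ over the residue field $\mathbb{F}_q$); in particular every torsion element of $K_m$ has $p$-power order, so any nontrivial torsion element has a power of order exactly $p$. When $\mathrm{char}(K) = p > 0$ and $G$ contains no element of order $p$, I can therefore take $U = K_1$: the subgroup $G \cap U$ inherits the absence of order-$p$ elements and hence, being contained in a pro-$p$ group, is torsion-free. When $\mathrm{char}(K) = 0$, the group $\mathrm{SL}_n(\oo)$ is a compact $p$-adic analytic group, so $p$-adic Lie theory supplies a uniform (hence torsion-free) open subgroup; any sufficiently deep $K_m$ lies inside it and is therefore torsion-free outright, and no hypothesis on $G$ is then required.

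With such a $U$ fixed, set $H = G \cap U$, a torsion-free subgroup of the compact group $U$. Now suppose $G$ is not discrete, so the identity is not isolated in $G$ and there is a sequence $g_k \in G \setminus \{I\}$ with $g_k \to I$. For $k$ large we have $g_k \in U$, hence $g_k \in H$; since $H$ is torsion-free, $g_k$ has infinite order and $\langle g_k \rangle$ is infinite. But $\langle g_k \rangle \subseteq U$ is contained in a compact set, and a discrete subgroup of a Hausdorff topological group is closed, so were $\langle g_k \rangle$ discrete it would be a closed, hence compact, discrete group, and therefore finite -- a contradiction. Thus $\langle g_k \rangle$ is a non-discrete cyclic subgroup of $G$, completing the contrapositive. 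The only real obstacle is the construction of the torsion-free neighbourhood $U$: in characteristic $p$ the congruence subgroups always contain unipotent elements of order $p$, and ruling these out is precisely what the no-$p$-torsion hypothesis achieves, which is also why the statement would fail without it.
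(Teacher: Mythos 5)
Your proof is correct, but it takes a genuinely different route from the paper. The paper argues geometrically: $G$ acts on the Bruhat--Tits building of type $\tilde{A}_{n-1}$, discreteness is reduced to finiteness of vertex stabilisers via its Lemma 2.1, the hypothesis on cyclic subgroups forces every vertex stabiliser to be periodic of bounded exponent (via the order bounds in the proof of Lemma 2.5), and then a Burnside-type theorem for linear groups (Wehrfritz, Theorem 9.1) is invoked to conclude that these stabilisers are finite. You instead work directly with the congruence filtration $\{K_m\}$ of $\mathrm{SL}_n(\oo)$: you produce a torsion-free compact open neighbourhood $U$ of the identity (using that $K_1$ is pro-$p$ in the positive-characteristic case, and a torsion-free uniform open subgroup --- or equivalently the elementary estimate that $K_m$ is torsion-free for $m > v(p)/(p-1)$ --- in characteristic zero), and then observe that an infinite cyclic subgroup trapped in the compact set $U$ cannot be discrete, since a discrete subgroup of a Hausdorff group is closed, hence here compact, hence finite. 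Both arguments hinge on the same essential point, namely that the hypotheses force every nontrivial element of $G$ sufficiently close to the identity to have infinite order, and both correctly isolate where the no-$p$-torsion assumption is needed in positive characteristic (the paper's Example 6.1 is exactly the unipotent counterexample you describe). Your version is more self-contained and avoids both the building and the Wehrfritz theorem; the paper's version fits more naturally into its surrounding machinery, since its discreteness criterion for actions on locally finite complexes is reused throughout.
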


Let $\Gamma$ and $H$ be groups, and let $\{\phi_n:\Gamma \to G_n\}$ be a sequence of homomorphisms onto subgroups $G_n$ of $H$. If $\lim_{n \to \infty} \phi_n(\gamma)$ exists as an element of $H$ for each $\gamma \in \Gamma$, then we define the group $G = \{g \in H : g= \lim_{n \to \infty} \phi_n(\gamma),\,\gamma \in \Gamma\}$ and say that the sequence of groups $\{G_n=\phi_n(\Gamma)\}$ converges algebraically to $G$. We also say that a sequence $\{a_n\}$ is eventually X if there exists an $N$ such that the element $a_n$ is X for all $n\geq N$.

We use \Cref{Jorg} to deduce the following algebraic convergence results for subgroups of $\slk$. The first is a non-archimedean analogue of \cite[Theorem 1]{J}.

\begin{thm}\label{algconv1}
Let $\Gamma\le \slk$ be discrete and non-elementary and let $\{\phi_n:\Gamma\to G_n\}$ be a sequence of isomorphisms between $\Gamma$ and subgroups $G_n$ of $\slk$ which are eventually discrete. If $\{G_n\}$ converges algebraically to $G$, then $G$ is discrete and non-elementary, and the map $\phi: \Gamma \to G$ defined by $\phi(\gamma)=\lim_{n \to \infty} \phi_n(\gamma)$ is an isomorphism. 
\end{thm}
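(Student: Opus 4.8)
The plan is to verify in turn that $\phi$ is a homomorphism, that it is injective, that $G$ is discrete, and finally that $G$ is non-elementary, with \Cref{Jorg} doing the real work in the middle two steps. That $\phi$ is a homomorphism is immediate from the continuity of matrix multiplication together with the fact that $\slk$ is closed in $K^4$: for $\gamma_1,\gamma_2\in\Gamma$ both $\lim_n\phi_n(\gamma_1)$ and $\lim_n\phi_n(\gamma_2)$ exist, so $\phi(\gamma_1\gamma_2)=\lim_n\phi_n(\gamma_1)\phi_n(\gamma_2)=\phi(\gamma_1)\phi(\gamma_2)$, and the limit lies in $\slk$ since $\det$ is continuous. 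Since $G=\phi(\Gamma)$ by definition, $\phi$ is automatically surjective, so it remains to prove injectivity, discreteness, and non-elementarity. The one structural input I would use repeatedly is the standard fact that a non-elementary subgroup $\Gamma$ of $\slk$ contains a \emph{finite} set $S$ such that for every $\gamma\neq 1$ the group $\langle\gamma,s\rangle$ is non-elementary for some $s\in S$. The point is that a non-elementary group contains a non-abelian free subgroup, hence is non-solvable, whereas a subgroup fixing an end of $T_K$ lies in a (solvable) Borel subgroup; therefore any such $\langle\gamma,s\rangle$ \emph{and} its isomorphic image under each $\phi_n$ fail to fix an end.

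For injectivity, suppose $\gamma\in\ker\phi$ with $\gamma\neq 1$, and pick $\delta\in\Gamma$ with $\langle\gamma,\delta\rangle$ non-elementary. For all large $n$ the group $\langle\phi_n(\gamma),\phi_n(\delta)\rangle=\phi_n(\langle\gamma,\delta\rangle)$ is a discrete subgroup of $G_n$ which, being isomorphic to a non-elementary group, does not fix an end; hence \Cref{Jorg} applies to it. But $\phi_n(\gamma)\to\phi(\gamma)=I$ forces $\tr^2(\phi_n(\gamma))\to 4$ and $[\phi_n(\gamma),\phi_n(\delta)]\to I$, so both valuations appearing in \eqref{JorgIneq} tend to $+\infty$ and their minimum exceeds $M_K$ for large $n$, contradicting \Cref{Jorg}. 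Thus $\phi$ is injective, and hence an isomorphism, so $G\cong\Gamma$ is infinite and non-solvable.

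Discreteness of $G$ is the heart of the argument. If $G$ were not discrete there would be distinct $g_k=\phi(\gamma_k)\to I$ with $g_k\neq I$. Applying $S$ to each $\gamma_k$ and passing to a subsequence, I may assume that a single $s\in S$ works for all $k$, so $\langle\gamma_k,s\rangle$ is non-elementary; set $a=\phi(s)$. For each fixed $k$ and all large $n$, \Cref{Jorg} applies to the discrete, end-free group $\phi_n(\langle\gamma_k,s\rangle)$, and letting $n\to\infty$ (using that $v$ is locally constant away from $0$, so the bound passes to the limit) yields
\begin{align*}
\min\{v(\tr^2(g_k)-4),\,v(\tr([g_k,a])-2)\}\le M_K.
\end{align*}
Now letting $k\to\infty$: since $g_k\to I$ and $a$ is \emph{fixed}, we obtain $\tr^2(g_k)\to 4$ and $[g_k,a]\to I$, so the left-hand side tends to $+\infty$, a contradiction. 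Hence $G$ is discrete. Finally $G$ is non-elementary: a discrete subgroup fixing a vertex would lie in a compact vertex-stabiliser and so be finite, contradicting $G\cong\Gamma$ infinite, while fixing an end or a pair of ends would make $G$ solvable.

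I expect the main obstacle to be the bookkeeping around the witnessing set $S$, and in particular arranging a companion $a$ \emph{independent of $k$} so that the commutators $[g_k,a]$ genuinely converge to $I$; the finiteness of $S$ (so that pigeonholing fixes a single companion) is exactly what prevents the companions from escaping to infinity and destroying the commutator estimate. A secondary technical point is the passage to the limit in the valuations in the displayed inequality, which should be handled as a $\liminf$ with the value $+\infty$ permitted rather than as a naive limit.
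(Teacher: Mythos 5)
Your strategy is essentially the paper's: pair a potentially trivial limit element with a hyperbolic companion so that \Cref{Jorg} applies to the corresponding subgroup of $G_n$, and derive a contradiction from the two valuations tending to infinity. Two of your substitutions are legitimate and even attractive: you transfer the hypothesis of \Cref{Jorg} across the isomorphisms $\phi_n$ by noting that a non-elementary group contains a non-abelian free subgroup while end-stabilisers of $T_K$ are solvable (the paper instead proves \Cref{non-elem-preserved}, characterising discrete elementary groups as finite-or-virtually-abelian), and in the discreteness step you let $n\to\infty$ first and then $k\to\infty$, whereas the paper stays inside $G_n$ and uses \Cref{technical} to force $\phi_n(\gamma_i)=\pm 1$. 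Your handling of the valuations in the limit (treating $v(0)=+\infty$ and using that $v$ is locally constant away from $0$) is correct.

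The genuine gap is the ``standard fact'' that carries both of your main steps: the existence of a finite set $S\subseteq\Gamma$ such that for \emph{every} $\gamma\neq 1$ some $\langle\gamma,s\rangle$ is non-elementary. As stated this is false: for $\gamma=-I$ the group $\langle\gamma,s\rangle$ is abelian, hence elementary, for every $s$; and an element $\gamma$ with $\gamma^2=-I$ acts on $\partial T_K\cong\mathbb{P}^1(K)$ as an involution and can stabilise the end-pair of every axis you put into $S$ unless $S$ is chosen with care. Moreover you give no construction of $S$ --- the justification you sketch (free subgroups versus solvability) proves a different statement, namely that non-elementarity of $\langle\gamma,s\rangle$ passes to $\phi_n(\langle\gamma,s\rangle)$, not that a uniform finite witnessing set exists. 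The paper's substitute is \Cref{distinct-axes} plus \Cref{technical}: two hyperbolic elements $h_1,h_2$ with pairwise distinct axis-ends witness non-elementarity for every $\gamma$ outside a small exceptional set of central or low-order torsion elements, and those exceptions must then be excluded by hand. In your discreteness step they are excluded automatically for large $k$ (since $g_k\to I$ forces $\tr(g_k)\to 2$, while the exceptional elements have trace $0$ or $-2$), but your injectivity argument genuinely breaks at $\gamma=-I\in\ker\phi$, which needs the separate observation that $\phi_n(-I)$ is a central involution of the non-elementary group $G_n$ and hence equals $-I$ for all $n$, so $\phi(-I)=-I\neq I$. With the set $S$ actually constructed and the central/torsion exceptions treated, your proof goes through.
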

\begin{remark}
\Cref{algconv1} does not require $\Gamma$ to be finitely generated.
\end{remark}

We also give a non-archimedean analogue of the main theorem of \cite{JK}. To remain consistent with the terminology described above, when we say that a sequence $\{G_n\}$ of subgroups of $\slk$ converges algebraically to $G$, we will implicitly assume that there is an underlying group $\Gamma$ and surjective homomorphisms $\phi_n \colon \Gamma \to G_n$. For each element $g \in G$ (where $g= \lim_{n \to \infty} \phi_n(\gamma)\in G$ for some $\gamma \in \Gamma$), we will then use $g_n$ to denote the corresponding element $\phi_n(\gamma)$ of $G_n$, so that $g= \lim_{n \to \infty} g_n$ for each $g \in G$. In particular, by choosing $\Gamma$ to be the free group of rank $r$, this allows us to apply the notion of algebraic convergence to $r$-generator subgroups of $\slk$.

\begin{thm}\label{algconv2}
Let $\{G_n  = \langle g_{1_n},\dots,g_{r_n}\rangle\}$ be a sequence of discrete non-elementary $r$-generator subgroups of $\slk$, where $r$ is a positive integer. If $\{G_n\}$ converges algebraically to the group $G = \langle g_1,\cdots,g_r \rangle \le \slk$, then $G$
is discrete and non-elementary, and the maps $\psi_n:g_i\mapsto g_{i_n}$ extend to surjective homomorphisms $\psi_n:G\to G_n$ for sufficiently large $n$.
\end{thm}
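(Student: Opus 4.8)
The plan is to prove Theorem \ref{algconv2} by reducing it to the isomorphism result already established in Theorem \ref{algconv1}. The key idea is that Theorem \ref{algconv1} applies to a \emph{fixed} discrete non-elementary group $\Gamma$, so I would like to produce such a $\Gamma$ as one of the limit groups in the sequence. First I would observe that, since $G$ is generated by the $r$ elements $g_1,\dots,g_r$ and the tree $T_K$ has no fixed end, vertex, or pair of ends stabilised by $G$, the non-elementarity of $G$ is witnessed by finitely many of these generators together with finitely much combinatorial data (a suitable configuration of axes/fixed-point sets in $T_K$). I would then argue that this same witnessing data persists under algebraic convergence, so that $G_n$ is non-elementary for all sufficiently large $n$; this is where I would invoke Theorem \ref{Jorg} to rule out the degenerate (elementary) limiting behaviour, exactly as in the proof of Theorem \ref{algconv1}.

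Next I would address discreteness and the homomorphism claim. The substantive content is the existence of the maps $\psi_n\colon G\to G_n$ for large $n$, i.e.\ that every relation holding among $g_1,\dots,g_r$ in $G$ already holds among $g_{1_n},\dots,g_{r_n}$ in $G_n$ for all sufficiently large $n$. Taking $\Gamma$ to be the free group $F_r$ of rank $r$ with basis $x_1,\dots,x_r$, both $G$ and each $G_n$ are quotients of $F_r$ via $x_i\mapsto g_i$ and $x_i\mapsto g_{i_n}$ respectively, and $\psi_n$ extends to a homomorphism precisely when the kernel of $F_r\to G$ is contained in the kernel of $F_r\to G_n$. So the goal becomes: for each word $w\in F_r$ with $w(g_1,\dots,g_r)=I$, we have $w(g_{1_n},\dots,g_{r_n})=I$ for all large $n$. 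The difficulty is that ``large $n$'' here could a priori depend on $w$, and there are infinitely many relations, so one cannot simply intersect over all $w$.

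The hard part, and the main obstacle, is therefore this uniformity: converting the relation-by-relation convergence into a single bound $N$ that works for all relations simultaneously. My plan to overcome it is to use the Noetherian/finite-presentation structure forced by discreteness. A discrete non-elementary subgroup of $\slk$ acts on the tree $T_K$ with finitely generated vertex stabilisers and, by the structure theory coming from the action on $T_K$ (Bass--Serre theory), is itself a finitely presented group, or at least is isomorphic to $G$ via the limit map $\phi$ constructed exactly as in Theorem \ref{algconv1}. Concretely, I would first apply (the proof of) Theorem \ref{algconv1} to the fixed group $\Gamma:=G$ and the homomorphisms $\phi_n\colon G\to G_n$, $g_i\mapsto g_{i_n}$: once I know these $\phi_n$ are \emph{eventually injective} and that $G$ is discrete non-elementary, it follows that $G$ is finitely presented, so only finitely many relations $w_1,\dots,w_m$ generate the kernel of $F_r\to G$. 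Then I apply the relation-by-relation convergence to just these finitely many words, take $N$ to be the maximum of the finitely many thresholds, and conclude that for $n\ge N$ every defining relation of $G$ maps to the identity in $G_n$, so $\psi_n$ is a well-defined surjective homomorphism.

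To make this rigorous I would carry out the steps in the following order. First, establish that $G$ is discrete and non-elementary and that the inclusion-induced maps behave well, by running the argument of Theorem \ref{algconv1} with $\Gamma=G$ and noting that $\phi_n\colon G\to G_n$ need not be injective a priori but the limit map recovers $G$; the non-archimedean J\o rgensen inequality \eqref{JorgIneq} is the tool that prevents the limit from collapsing to an elementary group. Second, extract finite presentability of $G$ from its discrete non-elementary action on $T_K$. Third, use convergence on the finite relation set to obtain the uniform $N$ and define $\psi_n$. Finally, surjectivity of $\psi_n$ is immediate since the images $g_{i_n}$ generate $G_n$ by hypothesis. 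I expect steps one and two to be largely parallel to the $r=\infty$ case handled in Theorem \ref{algconv1} and to the classical archimedean arguments, while step three is the genuinely new point that exploits the rigidity of the non-archimedean setting.
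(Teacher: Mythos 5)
Your overall architecture (establish that $G$ is discrete and non-elementary, then use finite presentability to reduce the homomorphism claim to finitely many relations) matches the paper's, but there are two genuine gaps. First, your treatment of non-elementarity runs in the wrong direction. You propose to take ``witnessing data'' for the non-elementarity of $G$ and push it forward to conclude that the $G_n$ are non-elementary --- but that is the \emph{hypothesis}; the conclusion to be proved is that the \emph{limit} $G$ is non-elementary. This direction is genuinely delicate: \Cref{alg-conv-exm} exhibits non-elementary groups converging algebraically to an elementary one (discreteness of the $G_n$ is what saves the day), so no soft ``persistence of configurations'' argument can work. The paper's proof handles this by reducing to $r=2$ via \Cref{good-one} and a pigeonhole on indices, then using \Cref{basic-convergence} and \Cref{approx-fixed-end} to produce a hyperbolic $h\in G$ and a $g\in G$ with $\ax(h)$ and $\ax(ghg^{-1})$ sharing no end. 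Relatedly, your proposed reduction to \Cref{algconv1} with $\Gamma=G$ is circular: that theorem requires $\Gamma$ to be discrete and non-elementary and the $\phi_n$ to be isomorphisms onto the $G_n$, whereas here the discreteness and non-elementarity of $G$, and even the existence of homomorphisms $G\to G_n$, are exactly what is to be proved.

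Second, in the homomorphism step you treat ``for each relator $w$ of $G$, $w(g_{1_n},\dots,g_{r_n})=I$ for all large $n$'' as given (``relation-by-relation convergence'') and identify only the uniformity over infinitely many relators as the obstacle. In fact the single-relator statement is itself the substantive point: one only knows $R_{s_n}\to 1$, and since the discrete groups $G_n$ vary with $n$ there is no uniform neighbourhood of the identity forcing $R_{s_n}=1$ eventually. The paper closes this by applying \Cref{Jorg} to the pairs $(R_{s_n},h_{t_n})$, where $h_{1_n},h_{2_n}\in G_n$ are hyperbolic with axes having pairwise distinct ends (obtained from such elements of $G$ via \Cref{basic-convergence} and \Cref{approx-fixed-end}), and then invoking \Cref{technical} to force $R_{s_n}=\pm 1$. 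The uniformity over the finitely many relators, which you flag as the hard part, is by contrast immediate once $G$ is known to be finitely presented (\Cref{finite-pres}). You would need to supply the Jørgensen-plus-technical-lemma step explicitly for the argument to go through.
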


In \Cref{alg-conv-exm}, we show that \Cref{algconv2} cannot be generalised to sequences of finitely generated subgroups of $\slk$ which are not discrete, or which are discrete but elementary.

\Cref{algconv2} implies the following non-archimedean analogue of the main proposition in \cite{J2}. Since the Lie group $\slqp$ is perfect, and its Lie algebra is generated by two elements \cite[Theorem 6]{Ku}, this is a very special case of a result by Breuillard and Gelander \cite[Corollary 2.5]{BG}.
\begin{prop}\label{dense}
Every dense subgroup of $\slqp$ contains two elements which generate a dense subgroup of $\slqp$. 
\end{prop}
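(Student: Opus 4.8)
The plan is to exhibit two elements $A,B$ of the given dense subgroup $\Delta\le\slqp$ which generate a subgroup that is simultaneously non-discrete and non-elementary, and then to observe that any such two-generator subgroup is automatically dense. For this reduction I would first record the structural fact that a closed non-discrete non-elementary subgroup $H$ of $\slqp$ equals $\slqp$. Since $H$ is a closed subgroup of a $p$-adic analytic group it is analytic, and being non-discrete it has positive dimension; the proper subalgebras of $\mathfrak{sl}_2(\qp)$ are, up to conjugacy, the unipotent line, the diagonal line, and the Borel subalgebra, each of which exponentiates into a subgroup fixing an end or a pair of ends. Non-elementarity therefore forces $\dim H=3$, so $H$ is open, and (being non-elementary) $H$ contains a hyperbolic element $g$; conjugating the deep unipotents of an open congruence neighbourhood by the powers of $g$ produces the full upper and lower unipotent subgroups, which generate $\slqp$. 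Hence $H=\slqp$, and it suffices to find $A,B\in\Delta$ with $\langle A,B\rangle$ non-discrete and non-elementary.

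To construct such a pair, I would first use density to choose a hyperbolic $B\in\Delta$: the hyperbolic elements contain the nonempty open set where $v(\tr(B))<0$, so $\Delta$ meets it. Let $\xi_+,\xi_-$ denote the two ends fixed by $B$, and let $V$ be a congruence neighbourhood of the identity chosen deep enough that every $A\in V$ satisfies both $v(\tr^2(A)-4)>M_K$ and $v(\tr([A,B])-2)>M_K$, where $M_K$ is the constant of \Cref{Jorg}; both valuations tend to infinity as $A\to I$ because then $\tr^2(A)\to4$ and $[A,B]\to I$. The set $S$ of elements carrying one of $\xi_\pm$ into $\{\xi_+,\xi_-\}$ is a finite union of cosets of end-stabilisers, hence closed and nowhere dense, so $V\setminus S$ is open and nonempty and meets $\Delta$ by density; I would choose $A\in\Delta\cap(V\setminus S)$. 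Then $B$ and $ABA^{-1}$ are hyperbolic with disjoint pairs of fixed ends, so $\langle A,B\rangle$ fixes no end, no vertex, and no pair of ends, i.e. it is non-elementary.

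Since $\langle A,B\rangle$ is non-elementary it does not fix an end, so if it were discrete then \Cref{Jorg} would give $\min\{v(\tr^2(A)-4),\,v(\tr([A,B])-2)\}\le M_K$, contradicting the choice of $A$. Therefore $\langle A,B\rangle$ is non-discrete as well as non-elementary, and by the reduction of the first paragraph its closure is all of $\slqp$; that is, $A$ and $B$ generate a dense subgroup. The main obstacle is precisely that first structural step: controlling the closure of a non-discrete non-elementary two-generator subgroup and verifying that it fills out $\slqp$, which is where the $p$-adic Lie theory and the generation of $\slqp$ by its unipotent subgroups are needed. Once this is in place, the production of $A$ and $B$ is a routine genericity argument driven by the non-archimedean J{\o}rgensen inequality of \Cref{Jorg}.
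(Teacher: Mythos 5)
Your argument is correct in substance but takes a genuinely different route from the paper. The paper's proof starts from the existence of a two-generator dense subgroup of $\slqp$ (via \cite{Ku}), approximates its generators $g,h$ by sequences $g_n,h_n$ in the given dense subgroup, and uses the algebraic-convergence machinery (\Cref{elementary-preserve} and \Cref{algconv2}) to rule out the possibility that all the groups $\langle g_n,h_n\rangle$ are elementary or discrete; some $\langle g_N,h_N\rangle$ is then non-elementary and non-discrete, hence dense by Lemmas 6.6 and 6.7 of \cite{CS}. You instead build the pair directly: a hyperbolic $B$ from the open set $v(\tr B)<0$, then $A$ in a deep congruence neighbourhood minus the (closed, nowhere dense) union of cosets of end-stabilisers, so that $\langle A,B\rangle$ is non-elementary by construction and non-discrete by a direct application of \Cref{Jorg}. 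Your genericity argument is sound and has the merit of not needing Kuranishi's input nor \Cref{algconv2}, whereas the paper's proof exhibits the proposition as a corollary of its main convergence theorem, in the spirit of J{\o}rgensen's original note. Both proofs ultimately rest on the same structural fact that a closed, non-discrete, non-elementary subgroup of $\slqp$ is everything; the paper simply cites \cite{CS} for this, and you would be better off doing the same, since your Lie-theoretic sketch of it is the only soft spot: the list of proper subalgebras of $\mathfrak{sl}_2(\qq_p)$ up to conjugacy must also include the non-split Cartan lines, whose normalisers are compact and hence fix a vertex rather than an end or pair of ends (the conclusion ``elementary'' still holds, but the case is missing), and the passage from ``$\mathrm{Lie}(H)$ is a proper subalgebra'' to ``$H$ is elementary'' should go through the fact that $H$ normalises its own Lie algebra under the adjoint action.
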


\begin{remark}
Throughout the paper, we will often identify a subgroup $G$ of $\slk$ with its image $\overline{G}$ in $\pslk$, equipped with the quotient topology. By \cite[Proposition 2.4]{Kramer}, the topology on $\overline{G}$ is equivalent to the topology of pointwise convergence induced from the isometry group ${\rm Isom}(T_K)$. The latter is equivalent to the compact-open topology; see \S 2.4 Theorem~1 and \S 3.4 Definition 1 of \cite[Chapter X]{B}. Note that $G$ is discrete (respectively non-elementary) if and only if $\overline{G}$ is discrete (respectively non-elementary).
\end{remark}

\section{Proofs of \Cref{Jorg} and \Cref{Jorg-AP}}

We first establish the following criterion for discreteness (and its proof), which is a generalisation of \cite[Lemma 4.4.1]{K}.

\begin{lem}\label{disc-criterion}
Let $G$ be a topological group which acts by isometries on a locally finite simplicial complex $X$. Suppose that $G$ is equipped with the topology of pointwise convergence.
\begin{enumerate}[label={$(\arabic*)$}]
\item If $G$ is discrete, then ${\rm Stab}_G(y)$ is finite for every vertex $y\in X$.
\item If ${\rm Stab}_G(y)$ is finite for some vertex $y \in X$, then $G$ is discrete.
\end{enumerate}
\end{lem}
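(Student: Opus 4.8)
The plan is to translate discreteness of $G$ into a statement about pointwise stabilisers of \emph{finite vertex sets}, and then to play this off against local finiteness. Write $V$ for the vertex set. The key preliminary observations are: (i) in the topology of pointwise convergence a basic open neighbourhood of the identity is the pointwise stabiliser of a finite set of points; (ii) since $G$ acts simplicially, its isometries permute $V$, preserve distances between vertices, and distinct vertices are uniformly separated, so up to replacing a point by the finitely many vertices of a simplex containing it we may take the sets in (i) to consist of vertices; and (iii) by local finiteness every metric ball $B(y,R)$ contains only finitely many vertices. In particular $\mathrm{Stab}_G(y)=\{g\in G: g\cdot y=y\}$ is itself a basic open (and closed) subgroup of $G$.

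For part (1), I would start from the hypothesis that $G$ is discrete, so that $\{e\}$ is open. Using (i)--(ii), this yields a finite set $F\subseteq V$ whose pointwise stabiliser in $G$ is trivial: starting from a basic neighbourhood of $e$ equal to $\{e\}$, one replaces each defining point by the vertices of a simplex containing it, using that a simplicial isometry fixing every vertex of a simplex fixes that simplex pointwise. Setting $R=\max_{x\in F} d(y,x)$, every element of $\mathrm{Stab}_G(y)$ fixes $y$ and hence permutes the finite (by (iii)) vertex set $B(y,R)\cap V$, which contains $F$. This gives a homomorphism from $\mathrm{Stab}_G(y)$ to the symmetric group on $B(y,R)\cap V$; its kernel fixes $F$ pointwise and is therefore trivial, so $\mathrm{Stab}_G(y)$ embeds into a finite group and is finite.

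For part (2), I would use that $H=\mathrm{Stab}_G(y)$ is open (as noted above) and finite by hypothesis. For each of the finitely many nontrivial elements $g_1,\dots,g_k$ of $H$, choose a vertex $z_i$ with $g_i\cdot z_i\neq z_i$, which exists because $G$ acts faithfully on $V$. The pointwise stabiliser $W$ of $\{y,z_1,\dots,z_k\}$ is open, is contained in $H$ (since it fixes $y$), and meets $H$ only in $e$; hence $W=\{e\}$. Thus $\{e\}$ is open and $G$ is discrete.

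The main obstacle is the careful reduction from pointwise convergence on all of $X$ to the combinatorics of $V$, so that local finiteness can be brought to bear; once $\mathrm{Stab}_G(y)$ is recognised as an open subgroup and balls are known to contain finitely many vertices, both implications reduce to elementary permutation-group arguments. A secondary point to watch is faithfulness: part (2) needs every nontrivial element of $\mathrm{Stab}_G(y)$ to move some vertex, which holds since $G$ acts by simplicial isometries and is identified with its image in $\mathrm{Isom}(X)$.
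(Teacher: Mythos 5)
Your proof is correct, and it takes a genuinely different route from the paper's. The paper proves part (1) by contraposition: assuming $\mathrm{Stab}_G(y)$ is infinite, it runs an inductive pigeonhole argument over the finitely many vertices in $B_{n+1}\setminus B_n$ to extract infinitely many elements fixing each ball $B_n$ pointwise, and hence a sequence of distinct nontrivial elements converging to $1$. You instead argue directly: discreteness hands you a finite vertex set $F$ with trivial pointwise stabiliser, and $\mathrm{Stab}_G(y)$ then embeds into the symmetric group of the finite vertex set of a ball containing $F$. For part (2) the paper uses a convergent-sequence criterion ($g_i\cdot y\to y$ forces $g_i\in\mathrm{Stab}_G(y)$ eventually, and a sequence in a finite set converging to $1$ is eventually $1$), whereas you exhibit $\{e\}$ as an explicit basic open set by adjoining to $y$ a witness vertex for each nontrivial element of the stabiliser. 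Both arguments ultimately rest on the same two facts --- that pointwise stabilisers of finite vertex sets form a neighbourhood basis of the identity, and that metric balls meet only finitely many vertices --- but your version buys two small things: it avoids characterising discreteness by sequences (which strictly requires first countability or nets), and it makes explicit the faithfulness/Hausdorffness assumption that the paper's ``eventually $1$'' step also uses silently; note that without faithfulness part (2) is literally false (a finite group acting trivially has finite stabilisers but carries the indiscrete topology). The paper's inductive construction, on the other hand, is the one that generalises most readily when one wants not just non-discreteness but an explicit sequence of nontrivial elements fixing arbitrarily large balls.
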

\begin{proof}
To prove $(1)$, suppose that $H={\rm Stab}_G(y)$ is infinite for some vertex $y$ of $X$. Let $B_n$ be the ball of radius $n \in \mathbb{N}$ about $y$.
Since $X$ is locally finite, the following inductive argument shows that there are infinitely many distinct elements of $H$ which fix each $B_n$ pointwise.

By the inductive hypothesis, we may assume that $H$ has infinitely many elements which fix $B_n$. Infinitely many of these elements, say $\{g_i : i\in I\}$, induce the same permutation on the finitely many vertices in $B_{n+1}\setminus B_n$. The set $\{g_ig_j^{-1} : i,j\in I\}$ then contains infinitely many distinct elements of $H$, each of which fixes $B_{n+1}$ pointwise. 
Hence we may choose a sequence $\{g_n\}$ of distinct non-trivial elements of $H$ such that $g_n$ fixes $B_n$ pointwise. Thus $\{g_n\}$ converges to 1, so $G$ is not discrete, which is a contradiction.

To prove $(2)$, suppose that ${\rm Stab}_G(y)$ is finite for some point $y$ and suppose that $\{g_i\}$ is a sequence of elements of $G$ converging to the identity. The sequence of vertices $\{g_i \cdot y \}$ converges to $y$, so there exists an $N$ such that $g_i \in {\rm Stab}_G(y)$ for each $i\geq N$. It follows that the sequence $\{g_i\}$ is eventually $1$, and hence $G$ is discrete.
\end{proof}

We now prove a non-archimedean version of \cite[Theorem 4.3.5(i)]{Beardon}.
 
\begin{lem}\label{trace-comm-2}
If $A \in \slk$ fixes an end $\xi \in \partial T_K$, then $B\in\slk$ fixes $\xi$ if and only if $\tr([A,B])=2$.
\end{lem}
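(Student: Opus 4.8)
The plan is to transport the whole statement to the boundary, using the $\slk$-equivariant identification of $\partial T_K$ with $\mathbb{P}^1(K)$, on which $\slk$ acts by fractional linear transformations (see \cite[Chapter II]{S}). This action is transitive on ends, so I would first conjugate: choose $g\in\slk$ with $g\cdot\xi=\infty=[1:0]$ and replace $A,B$ by $gAg^{-1},gBg^{-1}$, which changes neither $\tr([A,B])$ nor whether a given matrix fixes $\xi$. Thus we may assume $\xi=\infty$. The stabiliser of $\infty$ is the group of upper-triangular matrices, so $A=\begin{pmatrix} a & b\\ 0 & a^{-1}\end{pmatrix}$; and for $B=\begin{pmatrix} p & q\\ r & s\end{pmatrix}$ with $ps-qr=1$, the condition ``$B$ fixes $\xi$'' is exactly $r=0$, since $B\cdot[1:0]=[p:r]$. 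Since $A$ fixes the single end $\xi$ (as opposed to stabilising a pair of ends), $A$ has $\infty$ as its only fixed point in $\mathbb{P}^1(K)$, so $A$ is parabolic: $a=\pm1$ and $b\neq0$.

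The forward implication needs no computation. If $B$ also fixes $\infty$, then $A$ and $B$ both lie in the group of upper-triangular matrices, whose diagonal part is an abelian quotient; hence $[A,B]$ lies in the unipotent subgroup $\{\begin{pmatrix} 1 & \ast\\ 0 & 1\end{pmatrix}\}$ and $\tr([A,B])=2$.

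For the converse I would compute $\tr([A,B])$ directly in the entries and simplify using $ps-qr=1$, arriving at the factored identity
$$\tr([A,B])-2 = r\bigl(b^2 r + b(a-a^{-1})(p-s) - (a-a^{-1})^2 q\bigr).$$
Because $A$ is parabolic ($a=\pm1$, and $[-A,B]=[A,B]$ handles the sign), the bracketed factor collapses to $b^2 r$, so the identity becomes $\tr([A,B])-2=b^2 r^2$. As $K$ is a field and $b\neq0$, the hypothesis $\tr([A,B])=2$ forces $r=0$, i.e.\ $B$ fixes $\xi$, completing the proof.

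I expect the converse to be the main obstacle, and within it the essential use of the single-end hypothesis. The factored identity above shows that, for a \emph{general} upper-triangular $A$, the vanishing of $\tr([A,B])-2$ is equivalent to $B$ fixing one of the (at most two) fixed ends of $A$: the factor $r$ detects $\infty$, while the bracketed factor detects the second eigendirection of $A$ when $a\neq\pm1$. It is precisely the parabolic structure --- $A$ fixing $\xi$ as its unique end, so that the bracketed factor degenerates to $b^2 r$ --- that rules out the second alternative and upgrades ``$B$ shares a fixed end with $A$'' to the sharper ``$B$ fixes $\xi$''. I would therefore be careful to record this degeneration explicitly rather than treat $A$ as an arbitrary element fixing $\xi$.
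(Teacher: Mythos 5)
Your proof is correct and follows essentially the same route as the paper: identify $\partial T_K$ with $\mathbb{P}^1(K)$, conjugate $\xi$ to $[1:0]$ so that $A$ becomes upper triangular, dispose of the forward direction by noting that the commutator of upper-triangular matrices is unipotent, and settle the converse by the explicit formula for $\tr([A,B])-2$ (the paper simply cites the computation from Beardon's Theorem 4.3.5(i) at this point). One substantive remark: you read the hypothesis as ``$\xi$ is the \emph{unique} fixed end of $A$,'' i.e.\ $A$ is parabolic with $a=\pm1$ and $b\neq 0$, and your argument genuinely uses this. The printed statement only says ``$A$ fixes an end $\xi$,'' and under that literal reading the converse is false: for $A=\begin{pmatrix} a & 0\\ 0 & a^{-1}\end{pmatrix}$ with $a\neq\pm1$ and $B=\begin{pmatrix} 1 & 0\\ 1 & 1\end{pmatrix}$ one computes $[A,B]=\begin{pmatrix} 1 & 0\\ a^{-2}-1 & 1\end{pmatrix}$, so $\tr([A,B])=2$ although $B$ fixes $[0:1]$ and not $[1:0]$. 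Your factored identity makes the true general statement visible --- for $A\neq\pm 1$, $\tr([A,B])=2$ if and only if $B$ fixes \emph{some} fixed end of $A$ --- and shows that the sharper conclusion ``$B$ fixes $\xi$'' needs the bracketed factor to degenerate to $b^2r$, which is exactly the parabolic case. So your added hypothesis is not a gap but the correct reading (it is the one in force where the lemma is invoked in the proof of Theorem~\ref{Jorg}), and your writeup is in fact more careful than the paper's, whose claim that ``$\tr([A,B])=2$ if and only if $B$ is upper triangular'' fails for diagonalisable upper-triangular $A$ and for $A=\pm 1$.
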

\begin{proof}
We first note that the boundary $\partial T_K$ can be identified with the projective line $\mathbb{P}^1(K)$; see \cite[p. 72]{S}. Hence, after conjugation if necessary, we may assume that $\xi$ corresponds the eigenvector 
$(1,0)$ of $A$ or, equivalently, $A$ is upper triangular. A standard trace computation then shows that $\tr([A,B])=2$ if and only if $B$ is upper triangular (and hence also fixes $\xi$); see the proof of \cite[Theorem 4.3.5(i)]{Beardon} for details.
\end{proof}

\begin{remark}
If $A, B \in \slk$ are such that $\tr([A,B])=2$, then it is not necessarily the case that $A$ and $B$ must fix a common end of $T_K$; see $\Cref{tr-comm-2-ex}$.
\end{remark}

We also note the following two results.

\begin{prop}{\cite[Proposition II.3.15]{MS}}\label{TL}
The translation length of $X\in \slk$ on $T_K$ is
\begin{align*}
l(X)=-2\min\{0,v(\tr(X))\}.
\end{align*}
\end{prop}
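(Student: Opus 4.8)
The plan is to use the standard dichotomy for isometries of a tree: each $X\in\slk$ is either \emph{elliptic}, fixing a vertex of $T_K$ and hence satisfying $l(X)=0$, or \emph{hyperbolic}, translating along an invariant bi-infinite geodesic (its axis) with $l(X)>0$. I would decide which case occurs, and compute $l(X)$ in the hyperbolic case, in terms of the valuations of the eigenvalues $\lambda,\lambda^{-1}$ of $X$ (the roots of the characteristic polynomial $t^2-\tr(X)t+1$), noting that $v(\lambda)+v(\lambda^{-1})=v(\det X)=0$.

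First I would treat the case in which $X$ is diagonalisable over $K$, say $\slk$-conjugate to $\mathrm{diag}(\lambda,\lambda^{-1})$ with $\lambda,\lambda^{-1}\in K$. Then $X$ stabilises the apartment $\mathcal{A}$ attached to the diagonal torus, whose vertices are the homothety classes $[L_n]$ of the lattices $L_n=\oo e_1\oplus\pi^n\oo e_2$, where $\pi$ is a uniformiser with $v(\pi)=1$. A direct computation with lattice classes gives $X\cdot[L_n]=[L_{n-2v(\lambda)}]$, so $X$ shifts along $\mathcal{A}$ by $|2v(\lambda)|=|v(\lambda)-v(\lambda^{-1})|$. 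Since an invariant geodesic line of a hyperbolic tree isometry is its axis, this shift equals $l(X)$ when $v(\lambda)\neq v(\lambda^{-1})$, while if $v(\lambda)=v(\lambda^{-1})=0$ then $X$ fixes $[L_0]$ and is elliptic. As $v(\tr(X))=v(\lambda+\lambda^{-1})=\min\{v(\lambda),v(\lambda^{-1})\}$ whenever the two valuations differ (by the ultrametric inequality, with no cancellation), this yields $l(X)=-2\min\{0,v(\tr(X))\}$ in the split case.

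Next I would dispose of the remaining cases, all of which I expect to produce elliptic elements with $v(\tr(X))\ge 0$, so that both sides of the claimed formula vanish. If the eigenvalues lie in a quadratic extension $L/K$, then extending $v$ to $L$ and using Galois-invariance of the extended valuation together with $v(\lambda)+v(\lambda^{-1})=0$ forces $v(\lambda)=v(\lambda^{-1})=0$; hence $v(\tr(X))\ge 0$ and $X$ is elliptic. If $X$ is not semisimple, then $\lambda=\lambda^{-1}=\pm 1$ and $\tr(X)=\pm 2$, so again $v(\tr(X))\ge 0$, and the unipotent part of $X$ fixes a vertex, so $X$ is elliptic. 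In each case $l(X)=0=-2\min\{0,v(\tr(X))\}$, completing the argument.

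The main obstacle is the split hyperbolic computation: one must correctly identify the axis of $X$ with the apartment of the diagonalising torus and verify the exact shift amount, which relies on the explicit description of $T_K$ via homothety classes of lattices and the adjacency relation. The case analysis itself rests on the clean observation that $v(\tr(X))<0$ can occur only in the split case; once this is in hand, the formula follows immediately from the elliptic/hyperbolic dichotomy.
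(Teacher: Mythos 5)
The paper offers no proof of this proposition: it is quoted verbatim from Morgan--Shalen \cite[Proposition II.3.15]{MS}. Your argument is therefore necessarily a different route, namely the standard direct computation in the lattice model of $T_K$, and it is essentially correct. The key computation $X\cdot[L_n]=[L_{n-2v(\lambda)}]$ is right; an invariant line shifted by $s>0$ is indeed the axis, since a point $y$ at distance $d$ from the line is displaced by $2d+s$; and the ultrametric identity $v(\lambda+\lambda^{-1})=\min\{v(\lambda),v(\lambda^{-1})\}$ when the valuations differ correctly converts the shift $2|v(\lambda)|$ into $-2v(\tr(X))$. The reduction of the non-split semisimple case to $v(\lambda)=v(\lambda^{-1})=0$ via Galois-invariance of the unique extension of $v$ is also correct.

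The one step you leave essentially unjustified is the inference ``$v(\lambda)=v(\lambda^{-1})=0$, hence $X$ is elliptic'' in the non-split case. Eigenvalues of valuation zero do not bound the entries of $X$ itself (consider an antidiagonal matrix with entries $-\pi^{-m}$ and $\pi^{m}$), so $X$ need not fix the standard vertex, and ellipticity requires an argument. The standard repair: by Cayley--Hamilton, $X^n=a_nX+b_nI$ where $a_{n+1}=\tr(X)a_n-a_{n-1}$ and similarly for $b_n$, so $v(\tr(X))\ge 0$ forces $a_n,b_n\in\mathcal{O}_K$ for all $n$; hence the $\langle X\rangle$-orbit of any vertex is bounded, and a bounded orbit in a tree has a circumcentre, which $X$ fixes. (Alternatively, pass to $T_L$ over the splitting field $L$ and pull back boundedness of orbits.) Note that this bounded-orbit argument applies uniformly whenever $v(\tr(X))\ge 0$, so it also absorbs your split-elliptic and non-semisimple cases; the cleanest organisation of your proof is the dichotomy $v(\tr(X))<0$ (necessarily split over $K$, handled by your apartment computation) versus $v(\tr(X))\ge 0$ (elliptic by boundedness). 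With that point filled in, your proof is complete and self-contained, which is a genuine gain over the bare citation.
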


\begin{lem}\label{finitely-many}
The set $\{\tr(X) : X \in \slk \textup{ has finite order} \}$ is finite.
\end{lem}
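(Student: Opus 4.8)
The plan is to realise the trace of a finite-order element through its eigenvalues, identify these as roots of unity lying in a controlled extension of $K$, and then apply two standard finiteness features of non-archimedean local fields. The routine computations are negligible; the content is in setting up the right algebraic picture.

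First I would note that if $X\in\slk$ has finite order $n$, then its minimal polynomial divides $t^n-1$, so each eigenvalue $\lambda$ of $X$ is an $n$-th root of unity; here $\lambda$ and $\lambda^{-1}$ are the two roots of the characteristic polynomial $t^2-\tr(X)t+1\in K[t]$. This remains true when $X$ is not semisimple: in the remaining case $X$ is unipotent, which forces $\lambda=1$ and $\tr(X)=2$, and this occurs only in residue characteristic $p$, where $X$ may have order $p$. Since the characteristic polynomial is quadratic, the splitting field $K(\lambda)$ has degree at most $2$ over $K$. Moreover $\lambda$ is separable over $K$, because when $K$ has characteristic $p$ the only $p$-power root of unity is $1$, so every root of unity has order prime to $p$.

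It then suffices to show that the set $R$ of roots of unity in $\overline{K}$ lying in some extension of $K$ of degree at most $2$ is finite, since $\tr(X)=\lambda+\lambda^{-1}$ will range over the finite set $\{\mu+\mu^{-1}:\mu\in R\}$. For this I would invoke two facts: a non-archimedean local field admits only finitely many separable extensions of each bounded degree, and the group of roots of unity in any non-archimedean local field is finite. Writing $L_1,\dots,L_k$ for the finitely many extensions of $K$ of degree at most $2$, we obtain $R\subseteq\bigcup_{i}\mu(L_i)$, a finite union of finite sets. A more hands-on alternative is to split off the prime-to-$p$ part, which embeds into the residue field of each $L_i$ and so has order dividing $q^2-1$ (where $\mathbb{F}_q$ is the residue field of $K$), from the $p$-power part, which is trivial in characteristic $p$ and of bounded order in characteristic $0$ since $[K(\zeta_{p^a}):K]\to\infty$.

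The substance lies in citing these two finiteness inputs correctly and uniformly across the two types of local field. The one point demanding genuine care is the non-semisimple case in residue characteristic $p$, where $X$ need not be diagonalisable and one must reason through the minimal polynomial rather than an eigenbasis; this is exactly the case producing $\tr(X)=2$, which nonetheless lies in the finite set $R$ described above.
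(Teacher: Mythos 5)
Your proof is correct, and it rests on the same core reduction as the paper's: the trace of a finite-order element is $\lambda+\lambda^{-1}$ for $\lambda$ a root of unity lying in an extension of $K$ of degree at most $2$ (with the non-semisimple case contributing only $\tr(X)=\pm 2$). Where you diverge is in how the finiteness is extracted. The paper bounds the possible \emph{orders} of finite-order elements directly, citing that an order $n$ prime to $p$ must divide $q\pm 1$, that a unipotent of finite order in characteristic $p$ has order $p$, and Serre's formula $[\qp(\zeta_{p^k}):\qp]=(p-1)p^{k-1}$ in characteristic $0$; you instead invoke the two soft facts that a local field has only finitely many separable extensions of each bounded degree and that the group of roots of unity in any non-archimedean local field is finite. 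Your route is more self-contained and generalises painlessly to $\mathrm{SL}_n$, at the cost of being less explicit; the paper's route produces concrete divisibility constraints on orders, which it reuses later (the proof of Proposition 1.4 quotes this argument to get a bound on the \emph{exponent} of vertex stabilisers, not just on traces -- your argument also yields this, but you would need to say so). Your parenthetical ``hands-on alternative'' is in fact essentially the paper's proof. Two small slips, neither fatal: a non-semisimple finite-order element need not be unipotent --- it can be $-1$ times a unipotent, giving $\tr(X)=-2$ (and this happens only when $\mathrm{char}(K)=p$, not merely when the \emph{residue} characteristic is $p$; over $\qp$ every nontrivial unipotent has infinite order); and you correctly flag that separability is needed for the finitely-many-extensions fact in characteristic $p$, which is the one place a careless version of your argument would break.
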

\begin{proof}
Let $q=p^r$ be the size of the residue field of $K$.
We prove that there are only finitely many possible orders (and hence traces) of finite order elements in $\slk$. Indeed, if an element $X\in \slk$ has finite order $n$ coprime to $p$, then \cite[Proposition 3.3]{CS} shows that $n \mid q \pm 1$. On the other hand, if $X$ has order $p^k$, then $X$ is unipotent and hence $k=1$ when ${\rm char}(K)>0$ by \cite[p. 964]{L}, and $(p-1)p^{k-1}\le [K:\qp]$ when ${\rm char}(K)=0$ by \cite[Proposition 17, p. 78]{S-loc}.
\end{proof}

We can now prove Theorems \ref{Jorg} and \ref{Jorg-AP}.

\begin{proof}[Proof of \Cref{Jorg}]
We first define the constant
$$M_K=\max\{v(\tr(X)-2) : X \in \slk \textup{ has finite order and } \tr(X) \neq 2\}.$$
\Cref{finitely-many} shows that $M_K$ is well-defined. Moreover, it is non-negative by \Cref{TL}.

Now suppose that $G=\langle A,B \rangle \le \slk$ is discrete and violates $(\ref{JorgIneq})$, that is, 
$$\min\{v(\tr^2(A)-4), v(\tr([A,B])-2) \} > M_K\ge 0.$$
Since $G$ contains no infinite order elliptic elements by Lemma $\ref{disc-criterion}$, it follows from \Cref{TL} that $\tr(A)=\pm 2$  and $\tr([A,B])=2$. Hence $A$ has repeated eigenvalue $\pm 1$ and fixes exactly one end $\eta$ of $T_K$. Lemma $\ref{trace-comm-2}$ then shows that $B$ fixes $\eta$, so $G$ fixes an end of $T_K$. 
\end{proof}

\begin{proof}[Proof of \Cref{Jorg-AP}]
Let $G=\langle A, B \rangle$ be a discrete non-elementary subgroup of $\slk$, where either $K=\qp$, or $G$ contains no elements of order $p$. Since $v(2)$ and $v(4)$ are non-negative, \Cref{TL} and the ultrametric inequality show that \Cref{JorgIneq0} holds with strict inequality if $A$ or $[A,B]$ is hyperbolic. Hence we may assume that $A$ and $[A,B]$ are elliptic. 

If $B$ is also elliptic, then the fixed point sets of $A$ and $B$ are disjoint since $G$ is non-elementary. By Lemma 2.2 of \cite[Chapter 3]{Chis}, the axes of $AB$ and $A^{-1}B^{-1}$ translate in the same direction along the unique geodesic between the fixed point sets of $A$ and $B$. It follows from Lemma 3.1 of \cite[Chapter 3]{Chis} that $[A,B]$ is hyperbolic, which is a contradiction. Hence $B$ is hyperbolic.

Note that $A$ and $BA^{-1}B^{-1}$ must fix a common vertex, as otherwise $[A,B]$ is hyperbolic by Lemma 2.2 of \cite[Chapter 3]{Chis}. Since $A$ cannot fix an end of the translation axis $\ax(B)$ of $B$, this implies that $A$ fixes a subpath of $\ax(B)$ of finite length $\Delta\ge l(B)$.

Since $G$ is discrete, $A$ has finite order and $\Delta=l(B)$ by \Cref{disc-criterion} and \cite[Lemma 3.10]{CS}. \Cref{TL} implies that $\Delta\ge 2$ and hence the fixed point set of $A$ cannot be a vertex or an edge. Thus $A$ fixes two ends of $T_K$ by \cite[Proposition 3.4]{CS} and is hence diagonalisable over $K$. Let $\lambda, \lambda^{-1} \in K$ be the eigenvalues of $A$ which, since $G$ is non-elementary, must be distinct $n$-th roots of unity for some $n>2$. By Hensel's Lemma, each $n$-th root of unity in $K$ is the unique lift of an $n$-th root of unity modulo $\pi$, where $\pi$ is the uniformiser of $K$. It follows that $\lambda \neq \lambda^{-1} \mod \pi$ and hence ${v(\tr^2(A)-4)}=2v(\lambda-\lambda^{-1})=0$,   so we obtain equality in (\ref{JorgIneq0}).
\end{proof}

\section{Non-elementary groups}

In the following two sections, we establish results that are necessary to prove the remaining statements in the introduction, some of which are interesting in their own right. Throughout, unless otherwise specified, we will use $T$ to denote a $\Lambda$-tree with path metric $d$, and $\isomt$ to denote the isometry group of $T$. We equip $\isomt$ with the topology of pointwise convergence. 

By replacing $T$ by an appropriate subdivision if necessary, we may always assume that every subgroup $G$ of $\isomt$ acts without inversions; see Lemma 1.3 of \cite[Chapter 3]{Chis}. Hence each element $g\in G$ is either elliptic or hyperbolic, and we denote the corresponding translation length by $l(g)$. We also denote the fixed point set of an elliptic isometry $g$ by $\fix(g)$, and the translation axis of a hyperbolic isometry $h$ by $\ax(h)$.

As in \cite[Section 3.1]{Gromov}, we say that a subgroup of $\isomt$ of $T$ is elementary if it stabilises a vertex, an end, or a pair of ends of $T$, and non-elementary otherwise.

\begin{lem}\label{distinct-axes}
If $G$ is a non-elementary subgroup of $\isomt$, then it contains two hyperbolic elements whose axes have pairwise distinct ends.
\end{lem}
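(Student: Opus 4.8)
The plan is to restate the goal in a more tractable form: two axes of hyperbolic elements have four pairwise distinct ends exactly when they share no end, and since $T$ is a tree this happens exactly when the axes meet in a bounded set (possibly empty). So it suffices to produce two hyperbolic elements whose axes have bounded intersection. First I would produce a hyperbolic element at all: a group acting on a $\Lambda$-tree all of whose elements are elliptic fixes a vertex or an end (a standard consequence of the results in \cite[Chapter 3]{Chis}), and $G$ does neither, being non-elementary. So fix a hyperbolic $g$ and write $\xi,\mu$ for the two ends of $\ax(g)$. Because $G$ does not stabilise the pair $\{\xi,\mu\}$, there is $s\in G$ with $s\{\xi,\mu\}\neq\{\xi,\mu\}$; then $h:=sgs^{-1}$ is hyperbolic with $\ax(h)=s\,\ax(g)$, whose end-pair $\{s\xi,s\mu\}$ differs from $\{\xi,\mu\}$ and hence meets it in at most one end. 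If $\ax(g)$ and $\ax(h)$ share no end, then $g$ and $h$ have the four distinct ends required and we are finished.

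The remaining case, which I expect to be the main obstacle, is that $\ax(g)$ and $\ax(h)$ share exactly one end $\eta$. Here combining $g$ and $h$ cannot help: a hyperbolic isometry fixes both ends of its axis, so $g$ and $h$ each fix $\eta$, whence every element of $\langle g,h\rangle$ fixes $\eta$, and no element of $\langle g,h\rangle$ has an axis avoiding $\eta$. To break $\eta$ I must therefore use an element outside $\mathrm{Stab}_G(\eta)$, which exists precisely because $G$ is non-elementary and so does not fix the end $\eta$: choose $t\in G$ with $t\eta\neq\eta$. The conjugate $tgt^{-1}$ then has ends $\{t\eta,t\mu\}$ with $t\eta\neq\eta$, so it no longer fixes $\eta$, and the plan is to combine $g$ (or a high power $g^{N}$) with such a conjugate to obtain a hyperbolic element whose axis is a bounded "bridge" transverse to $\ax(g)$, analysing the relevant product via the translation-length and coherence lemmas for products of hyperbolic isometries with overlapping axes (Lemmas~2.2 and~3.1 of \cite[Chapter 3]{Chis}), passing to high powers so that the overlaps are bounded and the product is hyperbolic with a new axis.

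The delicate point, and what I expect to be the technical heart, is the bookkeeping of coincidences among the four ends $\eta,\mu,t\eta,t\mu$: a single conjugation can remove the shared end $\eta$ but create a new coincidence, for instance $t\mu=\mu$ or $t\eta=\mu$. The cleanest way to rule these out uniformly is through the North--South dynamics of hyperbolic isometries on the space of ends. As $N\to\infty$, $g^{N}$ attracts every end other than $\mu$ towards $\xi$, and conjugating by elements whose attracting end is a third limit end $\zeta\notin\{\eta,\mu\}$ (such $\zeta$ exists because a non-elementary group has infinitely many limit ends) produces a hyperbolic element both of whose ends lie near $\zeta$, hence disjoint from $\{\eta,\mu\}$. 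Verifying this contraction behaviour -- equivalently, that $G$ has at least three limit ends and that suitable conjugates place both ends close to a prescribed limit end -- is the step I would expect to require the most care.
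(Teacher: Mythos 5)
Your reduction and your first two steps match the paper's strategy: producing a hyperbolic $g$ from non-elementarity, and then conjugating to get a second hyperbolic element whose axis shares at most one end with $\ax(g)$, correctly disposes of the easy case and isolates the hard one, namely when the two axes share exactly one end $\eta$. You are also right that $g$ and $sgs^{-1}$ alone cannot escape $\eta$. But from that point on you have a plan rather than a proof, and the plan has a genuine problem: the existence of a third limit end $\zeta\notin\{\eta,\mu\}$ (and, more generally, the claim that a non-elementary group has infinitely many limit ends) is standardly \emph{deduced} from the existence of two hyperbolic elements with four pairwise distinct axis ends --- the very statement being proved --- so invoking it here is circular unless you give an independent argument. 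The North--South contraction estimates you defer are exactly where the remaining content lies, and in the generality of the lemma ($T$ is a $\Lambda$-tree, not assumed locally finite or simplicial) the ``pass to high powers so the overlaps are bounded'' step also needs justification.

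The paper resolves the hard case with a purely combinatorial second conjugation and no dynamics. Writing $\eta^+,\eta^-$ for the ends of $\ax(g)$ and assuming every hyperbolic axis in $G$ shares an end with $\ax(g)$ (otherwise there is nothing to prove), non-elementarity yields hyperbolic elements $h_1$ fixing $\eta^+$ but not $\eta^-$ and $h_2$ fixing $\eta^-$ but not $\eta^+$. If $\ax(h_1)$ and $\ax(h_2)$ do not already work, they share a third end $\zeta\notin\{\eta^+,\eta^-\}$, and then $h_1$ and $h_2gh_2^{-1}$ do work: $\ax(h_2gh_2^{-1})=h_2\cdot\ax(g)$ has ends $h_2\cdot\eta^+$ and $\eta^-$, both distinct from the ends $\eta^+$ and $\zeta$ of $\ax(h_1)$, since $h_2$ fixes only $\eta^-$ and $\zeta$. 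I would recommend either adopting this trick or, if you want to keep your route, first establishing the three-limit-ends claim by an argument that does not presuppose the lemma.
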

\begin{proof}
We start by showing that $G$ must contain a hyperbolic element $g$. Suppose for a contradiction that all elements of $G$ are elliptic. Note that $\fix(g_i)\cap \fix(g_j) \neq \varnothing$ for each pair of distinct elements $g_i,g_j \in G$, as otherwise Lemma 2.2 of \cite[Chapter 3]{Chis} shows that $g_ig_j$ is hyperbolic. Hence $G$ fixes either a vertex or an end by \cite[Lemma 1.6]{Tits77}, which is a contradiction.

Let $\eta^+,\eta^-$ be the two ends of $\ax(g)$. We may suppose that the axis of every hyperbolic element of $G$ has an end in common with $\ax(g)$, as otherwise there is nothing to prove. Since $G$ is non-elementary, there are hyperbolic elements $h_1, h_2$ such that $h_1$ fixes $\eta^+$ (but not $\eta^-$) and $h_2$ fixes $\eta^-$ (but not $\eta^+$). If $\ax(h_1)$ and $\ax(h_2)$ have pairwise distinct ends, then the proof is complete, so we may suppose that $\ax(h_1)$ and $\ax(h_2)$ have a common end $\zeta \notin \{\eta^+, \eta^-\}$. Since the ends $\eta^+$ and $\zeta$ of $\ax(h_1)$ are distinct from the ends $h_2\cdot \eta^+$ and $\eta^-$ of $\ax(h_2gh_2^{-1})=h_2\cdot \ax(g)$, this proves the lemma.
\end{proof}

We now observe that, in a discrete group $G$ of isometries of a locally finite simplicial tree $T$, no element of $G$ can fix precisely one end of the axis of a hyperbolic element of $G$.

\begin{lem}\label{one-implies-two}
Let $G$ be a discrete subgroup of $\isomt$, where $T$ is a locally finite simplicial tree. Suppose that $h\in G$ fixes two ends $\eta$ and $\zeta$ of $T$. If $g\in G$ fixes at least one of $\eta$ or $\zeta$, then $g$ commutes with a power of $h$, whence $g$ fixes both $\eta$ and $\zeta$.
\end{lem}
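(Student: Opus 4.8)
The plan is to first show that $g$ commutes with a positive power of $h$, and then to read off from this that $g$ fixes both ends. By relabelling we may assume $g$ fixes $\eta$. Since the two fixed ends are, in the setting of the motivating remark, the endpoints of the axis of a hyperbolic element, I will take $h$ to be hyperbolic, with $L=\ax(h)$ a bi-infinite geodesic having ends $\eta$ and $\zeta$; replacing $h$ by $h^{-1}$ if necessary, I may assume $h$ translates $L$ away from $\eta$. (This hyperbolicity is essential: an elliptic $h$ fixing $\eta$ and $\zeta$ fixes the whole line $L$ pointwise, and then a finite subgroup permuting the branches at a vertex of $L$ provides a counterexample to the displayed implication, because the only power of such an $h$ that $g$ is forced to commute with may be trivial.)

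Fix a vertex $v_0$ on $L$ and index the vertices of $L$ as $\{v_i\}_{i\in\mathbb{Z}}$ so that $v_i\to\eta$ as $i\to+\infty$ and $h(v_i)=v_{i-\ell}$, where $\ell=l(h)\ge 1$. Because $g$ fixes $\eta$, it maps the ray $(v_0,v_1,v_2,\dots)$ to another ray converging to $\eta$; the two rays share a common sub-ray, so $g$ acts on that sub-ray as a shift, giving $g(v_i)=v_{i+m}$ for all large $i$ and some fixed integer $m$. Setting $a_n=h^ngh^{-n}$, a direct computation then gives $a_n(v_0)=h^ng(v_{n\ell})=h^n(v_{n\ell+m})=v_m$ for every sufficiently large $n$; that is, all but finitely many of the conjugates $a_n$ send $v_0$ to the single vertex $v_m$.

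Now the set $\{x\in G : x(v_0)=v_m\}$ is, if non-empty, a coset of ${\rm Stab}_G(v_0)$, which is finite by \Cref{disc-criterion} since $G$ is discrete and $T$ is locally finite; hence this set is finite. Consequently the infinitely many elements $a_n$ (for large $n$) take only finitely many values, so $a_{n_1}=a_{n_2}$ for some $n_1<n_2$. This yields $g=h^{d}gh^{-d}$ with $d=n_2-n_1\ge 1$, i.e.\ $g$ commutes with $h^d$. Finally, since $h$ is hyperbolic, $h^d$ is hyperbolic with $\ax(h^d)=L$; as $g$ commutes with $h^d$ it preserves $\ax(h^d)=L$ and therefore permutes the pair of ends $\{\eta,\zeta\}$. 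Because $g$ already fixes $\eta$, it must also fix $\zeta$, which completes the proof.

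I expect the main obstacle to be treating a general $g$ uniformly: a priori $g$ could be elliptic (fixing merely a ray towards $\eta$) or hyperbolic (with an axis sharing only the end $\eta$ with $L$), and one would like to avoid a case split. The device that sidesteps this is the observation that $g$ fixing $\eta$ alone already forces the orbit vertex $a_n(v_0)$ to stabilise, after which discreteness and local finiteness do all the work through the finiteness of ${\rm Stab}_G(v_0)$. The other point requiring care is the status of $h$: both the argument and the statement genuinely need $h$ to be hyperbolic, so that the power $h^d$ produced is a non-trivial isometry whose axis is exactly $L$.
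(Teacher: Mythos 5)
Your argument is correct and runs on the same engine as the paper's proof: conjugate $g$ by powers of $h$, use discreteness plus local finiteness of $T$ (via \Cref{disc-criterion}) to force two of the conjugates $h^{n}gh^{-n}$ to coincide, deduce that $g$ commutes with a nontrivial power $h^{d}$, and conclude that $g$ preserves $\ax(h^{d})$ and hence fixes the second end. Where you differ is in how you show the conjugates take only finitely many values: the paper splits into cases according to whether $g$ is elliptic or hyperbolic and in each case produces a single vertex fixed by all the conjugates (respectively of $g$ by $h$, or of $h$ by $g$), so that they lie in a finite stabiliser; you instead note that $g$ eventually acts as a shift by some fixed $m$ on the ray towards $\eta$, so that $h^{n}gh^{-n}$ sends a fixed base vertex $v_0$ to the single vertex $v_m$ for all large $n$, placing these elements in a finite coset of ${\rm Stab}_G(v_0)$. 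That buys you a uniform treatment of elliptic and hyperbolic $g$, which is a genuine (if modest) streamlining. The one divergence from the statement is your standing assumption that $h$ is hyperbolic, which the lemma does not impose; but you are right that the assumption is doing real work. For elliptic $h$ in a discrete group, $h$ has finite order (its fixed vertex has finite stabiliser), so ``commutes with a power of $h$'' is vacuous, and one can build finite --- hence discrete --- counterexamples to the end-fixing conclusion in a regular tree by letting $h$ fix the line from $\eta$ to $\zeta$ pointwise while $g$ swaps the $\zeta$-branch with a branch off the line. The paper's own closing step ($g\cdot\zeta=h^{k}(g\cdot\zeta)$ forces $g\cdot\zeta\in\{\eta,\zeta\}$) likewise tacitly uses that $h^{k}$ is hyperbolic with axis the line between $\eta$ and $\zeta$. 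Since in every application of the lemma the relevant $h$ is hyperbolic, or the ambient $\slk$-setting excludes the elliptic configuration from a discrete group, nothing downstream is affected; your restriction should be read as a correction to the hypotheses rather than a gap in your proof.
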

\begin{proof}
Without loss of generality, we may suppose that $g$ fixes $\eta$. Let $P$ denote the unique bi-infinite path between $\eta$ and $\zeta$, and let us assume (by inverting $h$ if necessary) that if $h$ is hyperbolic then $\eta$ is the repelling fixed point of $\ax(h)$.

If $g$ is elliptic, then $g$ fixes a halfray $R$ on $P$. If $x$ is a point on $R$, then $h^igh^{-i}$ fixes $x$ for each $i\geq 0$. Since ${\rm Stab}_G(x)$ is finite by \Cref{disc-criterion}, we obtain that $h^igh^{-i}=h^jgh^{-j}$ for some distinct $i$ and $j$. Thus $g$ commutes with $h^k$, where $k=i-j$. 

If $g$ is hyperbolic, then $\ax(g)$ intersects $P$ in a halfray $R$ and we may assume (by inverting $g$ if necessary) that $\eta$ is the repelling fixed point of $\ax(g)$. Let $x \in R$ and observe that $gh^ig^{-1}h^{-i}$ fixes $x$ for each $i\geq 0$. A similar argument to above then shows that $g$ commutes $h^k$ for some integer $k$.

In either case, $g \cdot \zeta = gh^k \cdot \zeta = h^k (g \cdot \zeta)$, hence $g \cdot \zeta \in \{\eta,\zeta\}$. We conclude that $g$ fixes $\zeta$.
\end{proof}

We also prove the following non-archimedean analogue of \cite[Lemma 5]{JK}.

\begin{lem}\label{good-one}
Suppose that $G = \langle g_1,\cdots,g_r \rangle\le \isomt$ is a discrete non-elementary subgroup of $\isomt$, where $T$ is a locally finite simplicial tree. If $h \in G$ has infinite order, then the group $\langle g_i,h \rangle$ is non-elementary for some $i\in \{1,\cdots,r\}$.
\end{lem}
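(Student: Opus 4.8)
The plan is to argue by contradiction: assuming that $\langle g_i,h\rangle$ is elementary for every $i\in\{1,\dots,r\}$, I will show that all of the generators $g_1,\dots,g_r$ stabilise one common pair of ends, forcing $G$ itself to be elementary. This mirrors the strategy of \cite[Lemma 5]{JK}.

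First I would pin down the type of $h$. Since $G$ is discrete and $T$ is locally finite, \Cref{disc-criterion} shows that every vertex stabiliser in $G$ is finite, so $G$ contains no infinite-order elliptic element. As $h$ has infinite order, it must therefore be hyperbolic; write $\eta$ and $\zeta$ for the two ends of $\ax(h)$, which are precisely the ends of $T$ fixed by $h$ (and indeed by every non-trivial power of $h$, since each such power is hyperbolic with axis $\ax(h)$).

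Next, for a fixed $i$, I would analyse what it means for $\langle g_i,h\rangle$ to be elementary. Because $h$ is hyperbolic it fixes no vertex, so $\langle g_i,h\rangle$ cannot stabilise a vertex; hence it stabilises either a single end or a pair of ends. If it fixes a single end $\omega$, then $h$ fixes $\omega$, so $\omega\in\{\eta,\zeta\}$, and $g_i$ fixes $\omega$ as well; \Cref{one-implies-two} then upgrades this to $g_i$ fixing both $\eta$ and $\zeta$. If instead $\langle g_i,h\rangle$ stabilises a pair of ends $\{\omega_1,\omega_2\}$, then $h$ permutes this pair. A hyperbolic isometry cannot interchange two ends (otherwise its square would fix both while the isometry itself moves them, which is impossible as $h$ and $h^2$ fix exactly $\eta$ and $\zeta$), so $h$ fixes both $\omega_1$ and $\omega_2$, whence $\{\omega_1,\omega_2\}=\{\eta,\zeta\}$; in particular $g_i$ stabilises $\{\eta,\zeta\}$. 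In every case $g_i$ stabilises the pair $\{\eta,\zeta\}$.

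Finally, since each generator $g_i$ stabilises $\{\eta,\zeta\}$, the whole group $G=\langle g_1,\dots,g_r\rangle$ stabilises $\{\eta,\zeta\}$, contradicting the hypothesis that $G$ is non-elementary; this completes the proof. I expect the case analysis in the previous step to be the main obstacle, and the decisive input there is \Cref{one-implies-two}, which converts the a priori weaker conclusion ``$g_i$ fixes one end of $\ax(h)$'' into ``$g_i$ fixes both''. Checking that a hyperbolic isometry cannot swap a pair of ends is the only other delicate point, and it follows immediately from the fact that $h$ together with all its powers fixes exactly $\eta$ and $\zeta$.
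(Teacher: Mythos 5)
Your proposal is correct and follows essentially the same route as the paper: deduce that $h$ is hyperbolic from \Cref{disc-criterion}, use \Cref{one-implies-two} to rule out a generator fixing exactly one end of $\ax(h)$, and conclude that every $g_i$ (hence $G$) stabilises the pair of ends of $\ax(h)$, contradicting non-elementarity. Your extra case analysis of what ``elementary'' means for $\langle g_i,h\rangle$ and the check that a hyperbolic isometry cannot swap a pair of ends simply make explicit steps the paper leaves implicit.
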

\begin{proof}
By \Cref{disc-criterion}, $h$ must be hyperbolic. Let $\eta^-$ and $\eta^+$ be the ends of $\ax(h)$ and suppose for a contradiction that $\langle g_i,h \rangle$ is elementary for every $i$. By \Cref{one-implies-two}, $g_i$ cannot fix precisely one of $\eta^-$ or $\eta^+$, and hence $g_i$ stabilises the set $\{\eta^-,\eta^+\}$. Thus $G$ stabilises $\{\eta^-,\eta^+\}$ and is hence elementary, which is a contradiction.
\end{proof}

We recall the following well-known non-archimedean analogue of Scott's core theorem \cite{Scott}.
\begin{lem}\label{finite-pres}
A discrete, non-elementary and compactly generated subgroup $G$ of $\slk$ is finitely presented.
\end{lem}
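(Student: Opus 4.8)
The plan is to adapt the classical proof of Scott's core theorem to the non-archimedean, tree-theoretic setting, reducing the problem to the known structure theory for groups acting on trees. First I would invoke \Cref{disc-criterion}: since $G$ is discrete, every vertex stabiliser is finite, so $G$ acts properly discontinuously on the locally finite tree $T_K$. Combined with compact generation, this is exactly the setting in which a group acting on a tree with finite vertex stabilisers admits a nice decomposition.

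The key observation is that a finitely generated group acting properly on a tree with finite edge and vertex stabilisers is a finitely presented group. I would make this precise via Bass--Serre theory: since $G$ is compactly generated, a standard argument shows that $G$ is in fact finitely generated (a compact generating set, together with the fact that $G$ is discrete and the tree is locally finite, forces finite generation; alternatively one selects finitely many vertices whose stabilisers and the edges between them generate $G$). The quotient graph of groups $G\backslash\!\backslash T_K$ then has all vertex and edge groups finite. Passing to a $G$-invariant subtree on which $G$ acts cocompactly (this is where the core-theorem flavour enters: non-elementarity guarantees a minimal invariant subtree $T_0$, and finite generation guarantees that $G$ acts cocompactly on $T_0$), one obtains that $G\backslash T_0$ is a finite graph, with finite vertex and edge groups. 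The fundamental theorem of Bass--Serre theory then presents $G$ as the fundamental group of this finite graph of finite groups, which is manifestly finitely presented, being built from finitely many finite groups by finitely many amalgamated-product and HNN operations.

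The main obstacle I expect is establishing cocompactness of the action on the minimal invariant subtree $T_0$ from compact generation, i.e. verifying that $G\backslash T_0$ is a \emph{finite} graph. Non-elementarity ensures $T_0$ exists and is unique (the union of the axes of hyperbolic elements, which is nonempty by \Cref{distinct-axes}), but one must argue that a compact generating set of $G$ forces the quotient to be finite. The cleanest route is: compact generation plus discreteness plus local finiteness of $T_K$ implies $G$ is finitely generated, and a finitely generated group acting minimally on a tree acts cocompactly on it; this last fact is standard in Bass--Serre theory but requires care in the non-simplicial or general $\Lambda$-tree phrasing, which is why restricting to the locally finite simplicial tree $T_K$ (rather than a general $\Lambda$-tree) is essential here.

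Once cocompactness and finiteness of the vertex/edge groups are in hand, finite presentability is immediate, so I would keep that final step brief and concentrate the write-up on the reduction to a finite graph of finite groups.
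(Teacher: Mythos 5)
Your proposal is correct, and it reaches the same waypoint as the paper --- a cocompact action of $G$ on a subtree of $T_K$ with finite stabilisers --- but by different supporting results, so a comparison is worthwhile. The paper's proof is two citations long: discreteness makes $G$ closed in the Hausdorff group $\slk$, Remark 2.3 and Lemma 2.4 of \cite{CD} then produce a subtree on which the compactly generated closed group $G$ acts cocompactly, and finite presentability follows from \cite[Corollary I.8.11]{BH} (a group acting properly and cocompactly on a simply connected geodesic space is finitely presented), with properness supplied by the finiteness of vertex stabilisers from \Cref{disc-criterion}. You instead observe that compact generation of a \emph{discrete} group is just finite generation, take $T_0$ to be the minimal invariant subtree (the union of axes of hyperbolic elements, nonempty by non-elementarity), prove cocompactness by the standard spanning-subtree argument for a finite generating set, and conclude via the Bass--Serre presentation of a fundamental group of a finite graph of finite groups. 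Your route is more elementary and self-contained --- it avoids the machinery of \cite{CD} for general closed subgroups of $\mathrm{Aut}(T)$ and the general nonpositive-curvature result of \cite{BH}, replacing both with classical Bass--Serre theory --- at the cost of a longer write-up; the paper's route is shorter on the page and its final step applies verbatim to actions on more general spaces where no graph-of-groups decomposition is available. Each step you flag as delicate (finite generation from compact generation, cocompactness of the minimal action of a finitely generated group, finiteness of stabilisers via \Cref{disc-criterion}) is handled correctly, so there is no gap.
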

\begin{proof}
Since $G$ is a discrete subgroup of a Hausdorff group it is closed. Hence by Remark 2.3 and Lemma 2.4 of \cite{CD}, there exists a subtree of $T$ on which the action of $G$ is cocompact. Since $G$ acts properly on $T_K$, it follows from \cite[Corollary I.8.11]{BH} that $G$ is finitely presented. 
\end{proof}

Finally, we show that, within the class of discrete subgroups of $\slk$, the property of being non-elementary is preserved under isomorphism. In general, this property does not have to be preserved under isomorphism; see \Cref{non-elem-iso}.

\begin{lem}\label{non-elem-preserved}
If $G_1$ and $G_2$ are isomorphic discrete subgroups of $\slk$, then $G_1$ is non-elementary if and only if $G_2$ is non-elementary.
\end{lem}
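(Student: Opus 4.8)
The plan is to isolate an abstract, isomorphism-invariant group property that, for discrete subgroups of $\slk$, is equivalent to being non-elementary. The property I would use is \emph{containing a free subgroup of rank two}. Since this property is preserved by any group isomorphism, once the equivalence is established the lemma is immediate: if $G_1\cong G_2$ are both discrete, then $G_1$ is non-elementary if and only if it contains a copy of $F_2$, if and only if $G_2$ contains a copy of $F_2$, if and only if $G_2$ is non-elementary.

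First I would show that a non-elementary (not necessarily discrete) subgroup $G$ contains a free subgroup of rank two. By \Cref{distinct-axes}, $G$ contains two hyperbolic elements $a$ and $b$ whose axes have four pairwise distinct ends in $\partial T_K\cong \mathbb{P}^1(K)$. Each hyperbolic element acts on the boundary with north--south dynamics, attracting towards one end of its axis and repelling from the other. Choosing disjoint neighbourhoods of the four ends and passing to sufficiently high powers $a^n,b^n$, a standard ping-pong argument shows that $\langle a^n,b^n\rangle$ is free of rank two. Note that discreteness plays no role in this direction.

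Next I would prove the contrapositive of the reverse implication: an elementary discrete subgroup $G$ contains no free subgroup of rank two. There are three cases, according to the definition of elementary. If $G$ fixes a vertex $y$, then $G={\rm Stab}_G(y)$ is finite by \Cref{disc-criterion}$(1)$, and a finite group contains no $F_2$; this is the only place discreteness is used, and it is exactly what fails for non-discrete elementary groups (cf.\ the comment preceding the lemma). If $G$ fixes an end, then identifying $\partial T_K$ with $\mathbb{P}^1(K)$ and conjugating so that the end is $(1,0)$, every element of $G$ is upper triangular, so $G$ is metabelian and hence solvable. If $G$ stabilises a pair of ends, then after conjugation $G$ lies in the normalizer of the diagonal torus, which is virtually abelian and so solvable. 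Since a free group of rank two is not solvable, no solvable group contains $F_2$, which completes this direction.

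Combining the two implications yields the desired isomorphism-invariant characterization, and the lemma follows as indicated above. I expect the main technical point to be the ping-pong construction of the free subgroup --- verifying that the four ends admit disjoint ping-pong neighbourhoods and that high enough powers satisfy the required table-tennis inequalities; the elementary cases are routine linear algebra, with the single essential use of discreteness being the finiteness of vertex stabilizers supplied by \Cref{disc-criterion}.
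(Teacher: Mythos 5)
Your proof is correct, but it takes a genuinely different route from the paper. The paper also reduces the lemma to an isomorphism-invariant characterization, but the invariant it uses is \emph{being finite or containing an abelian subgroup of index at most two}: for the forward direction it runs the same three-case analysis you do (vertex stabiliser finite by \Cref{disc-criterion}; pair of ends gives an index-two diagonal subgroup; a single fixed end forces, via \Cref{one-implies-two}, upper triangular matrices with repeated eigenvalue $\pm 1$, hence abelian), and for the converse it shows an infinite discrete virtually abelian group must stabilise the pair of ends of the axis of a hyperbolic element, using \Cref{trace-comm-2}. Your invariant --- \emph{containing a free subgroup of rank two} --- works equally well: the ping-pong construction from \Cref{distinct-axes} is standard for two hyperbolics on a locally finite tree whose axes have four distinct ends (their overlap is a finite segment, and $\partial T_K$ is compact, so north--south dynamics give the table-tennis inequalities for high powers), and your three elementary cases correctly yield finite or solvable groups, with discreteness used exactly once, in the vertex case, just as in the paper (and as Example \ref{non-elem-iso} shows is necessary). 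The trade-off: the paper's argument avoids ping-pong and yields a sharper structural description of discrete elementary subgroups (finite or virtually abelian of index $\le 2$), which is closer in spirit to the classical Kleinian-group statement; your argument is more robust in that the implication ``non-elementary $\Rightarrow$ contains $F_2$'' needs no discreteness at all and sidesteps the slightly delicate converse direction (showing virtually abelian implies elementary) that the paper must handle via \Cref{one-implies-two} and \Cref{trace-comm-2}.
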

\begin{proof}
We prove this by showing that a discrete subgroup of $\slk$ is elementary if and only if it is finite or has an abelian subgroup of index at most two.

Indeed, suppose that $G$ is a discrete elementary subgroup of $\slk$. If $G$ fixes a vertex of $T_K$, then $G$ is finite by \Cref{disc-criterion}. If $G$ preserves a pair of ends of $T_K$, then it contains a subgroup $H$ of index at most two which pointwise fixes a pair of ends of $T_K$. Since the boundary $\partial T_K$ can be identified with the projective line $\mathbb{P}^1(K)$ \cite[p. 72]{S}, we may assume by conjugation that $H$ consists entirely of diagonal matrices and is therefore abelian.
If $G$ fixes precisely one end $\eta$ of $T_K$, then by \Cref{one-implies-two}, every element of $G$ fixes $\eta$ and no other ends of $T$. Using the identification of $\partial T_K$ with $\mathbb{P}^1(K)$, we may then assume that $G$ consists entirely of upper triangular matrices with both eigenvalues either $1$ or $-1$. Thus $G$ is abelian.

Conversely, note that every finite subgroup of $\slk$ must fix a vertex of $T_K$ (and is therefore elementary) by Lemma 2.1 of \cite[Chapter 4]{Chis}. So suppose that $G$ is an infinite discrete subgroup of $\slk$ which contains an abelian subgroup $H$ of index at most two. We may assume that $H$ contains an infinite order element $h$, which must be hyperbolic by \Cref{disc-criterion}. Since $H$ is abelian, \Cref{trace-comm-2} shows that every element of $H$ fixes both ends $\eta^\pm$ of $\ax(h)$. Let $g\in G\setminus H$ and, since $H\trianglelefteq G$, we obtain that $(g^{-1}Hg)(\eta^{\pm}) = \eta^{\pm}$. It follows that $g$ stabilises $\{\eta^+, \eta^-\}$, hence $G$ is elementary.
\end{proof}

\section{Converging sequences}

Using the same notation as in the previous section, we continue to establish some results needed to prove the remaining statements in the introduction.

\begin{lem}\label{basic-convergence}
Let $\{g_n\}$ be a sequence of elements of $\isomt$ converging to $g \in \isomt$. 
\begin{enumerate}
\item[$(1)$] If $g$ is elliptic and there is a uniform lower bound $l_\mathrm{min}$ on the translation length of hyperbolic elements in the sequence $\{g_n\}$, then the sequence $\{g_n\}$ is eventually elliptic.
\item[$(2)$] If $g$ is hyperbolic, then the sequence $\{g_n\}$ is eventually hyperbolic.
\end{enumerate}
\end{lem}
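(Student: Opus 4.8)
The plan is to reduce both statements to the behaviour of a single scalar quantity attached to each isometry, namely the difference of displacements of a fixed base point under $g$ and $g^2$. The key tool is the standard description of the translation length via the displacement function on a $\Lambda$-tree (see \cite[Chapter 3]{Chis}): for every $x \in T$ and every $w \in \isomt$ one has
\begin{equation*}
l(w) = \max\{0,\ d(x, w^2\cdot x) - d(x, w\cdot x)\}.
\end{equation*}
Concretely, I will use the two consequences of this that the proof actually needs: if $w$ is elliptic then $d(x, w^2\cdot x) \le d(x, w\cdot x)$, so the displacement difference is non-positive, while if $w$ is hyperbolic then $d(x, w^2\cdot x) - d(x, w\cdot x) = l(w) > 0$. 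The elliptic inequality follows from the fact that $x$, $w\cdot x$ and $w^2\cdot x$ all lie at a common distance from any point $p \in \fix(w)$, together with the tree (isosceles) condition applied to the three geodesics emanating from $p$; the hyperbolic identity is immediate from the displacement formula $d(x, w\cdot x) = l(w) + 2\,d(x, \ax(w))$ applied to both $w$ and $w^2$.

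Next I will set up the limiting behaviour. Fix any $x \in T$ and put $\delta_n = d(x, g_n^2\cdot x) - d(x, g_n\cdot x)$ and $\delta = d(x, g^2\cdot x) - d(x, g\cdot x)$. Since $g_n \to g$ pointwise we have $g_n\cdot x \to g\cdot x$, and because the $g_n$ are isometries the triangle inequality gives $d(g_n^2\cdot x, g^2\cdot x) \le d(g_n\cdot x, g\cdot x) + d(g_n\cdot(g\cdot x), g\cdot(g\cdot x)) \to 0$, so $g_n^2\cdot x \to g^2\cdot x$ as well. As $d(x,\cdot)$ is $1$-Lipschitz, it follows that $\delta_n \to \delta$.

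The two parts then fall out by comparing $\delta_n$ with the dichotomy above. For $(2)$, $g$ hyperbolic gives $\delta = l(g) > 0$, so $\delta_n > 0$ for all large $n$; since an elliptic isometry would force $\delta_n \le 0$, each such $g_n$ must be hyperbolic, proving the sequence is eventually hyperbolic. For $(1)$, $g$ elliptic gives $\delta \le 0$. If the conclusion failed there would be infinitely many $n$ with $g_n$ hyperbolic, and for each of these the hyperbolic identity together with the hypothesis yields $\delta_n = l(g_n) \ge l_{\mathrm{min}} > 0$; passing to this subsequence contradicts $\delta_n \to \delta \le 0$. Hence only finitely many $g_n$ are hyperbolic and the sequence is eventually elliptic.

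The only real subtlety, and the place where the two parts genuinely differ, is the role of the uniform lower bound $l_{\mathrm{min}}$ in $(1)$: without it a sequence of hyperbolic elements whose translation lengths shrink to $0$ could converge to an elliptic limit, so the bare convergence $\delta_n \to \delta \le 0$ would not rule out infinitely many hyperbolic terms. Establishing the displacement dichotomy cleanly, in particular the elliptic inequality, is the main piece of work; once that and the continuity $\delta_n \to \delta$ are in hand, both conclusions are immediate.
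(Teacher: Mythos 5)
Your proof is correct, but it takes a genuinely different route from the paper's. You base everything on the identity $l(w)=\max\{0,\,d(x,w^2\cdot x)-d(x,w\cdot x)\}$ (valid for isometries acting without inversions, which is the paper's standing convention after subdivision), together with the continuity of $\delta_n=d(x,g_n^2\cdot x)-d(x,g_n\cdot x)$ under pointwise convergence; both parts then reduce to comparing the sign of $\lim\delta_n$ with the dichotomy ``elliptic $\Rightarrow\delta\le 0$, hyperbolic $\Rightarrow\delta=l(w)>0$''. The paper instead treats the two parts separately and more concretely: for $(1)$ it takes $x\in\fix(g)$ and notes that $d(g_n\cdot x,x)<l_{\min}$ eventually, which is incompatible with $g_n$ hyperbolic since a hyperbolic isometry displaces every point by at least its translation length; for $(2)$ it argues by contradiction with an elliptic subsequence, using that the midpoint of $[x,h_n\cdot x]$ is fixed by $h_n$ and converges to the midpoint of $[x,g\cdot x]$, forcing $d(g\cdot m,m)<l(g)$. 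Your approach buys uniformity (one mechanism for both parts, arbitrary base point, no midpoint estimates) and makes part $(2)$ noticeably shorter; the paper's buys self-containedness, using only the displacement of a single point and the cited fixed-midpoint lemma rather than the full translation-length formula. Two small points to make explicit if you write this up: the displacement dichotomy does require the no-inversion assumption (an inversion has no fixed point yet $\delta\le 0$), and both your argument and the paper's implicitly need $l_{\min}>0$ for part $(1)$ to have content.
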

\begin{proof}
If $g$ is elliptic, then let $x \in \fix(g)$. Using the topology of pointwise convergence, we may choose a positive integer $N$ such that $d(g_n\cdot x, x)=d(g_n\cdot x, g \cdot x)<l_\mathrm{min}$ for each $n \ge N$. It follows that $g_n$ fixes $x$ for each $n \ge N$.

On the other hand, if $g$ is hyperbolic, then assume for a contradiction that there is a subsequence $\{h_n\}$ of $\{g_n\}$ which consists only of elliptic elements and converges to $g$. Let $x$ be a point of $T$ and let $m$ be the midpoint of $[x, g\cdot x]$. Using the topology of pointwise convergence, we may choose a positive integer $N$ such that $d(h_n\cdot x,g \cdot x)<\frac{1}{2}l(g)$ and $d(h_n \cdot m, g \cdot m)<\frac{1}{2}l(g)$ for each $n \ge N$. By Lemma 1.1 of \cite[Chapter 3]{Chis}, the midpoint $m_n$ of $[x, h_n\cdot x]$ is fixed by $h_n$ for every $n$. Note that $d(m_n,m)\leq  \frac{1}{2} d(h_n \cdot x, g\cdot x)<\frac{1}{4}l(g)$ by \cite[Proposition II.2.2]{BH}  and thus 
$$d(h_n\cdot m,m)\leq d(h_n\cdot m,h_n\cdot m_n)+d(h_n \cdot m_n,m_n)+d(m_n,m)<\frac{1}{2}l(g)$$ 
for each $n \ge N$. Hence $d(g\cdot m,m)\leq d(g\cdot m,h_n\cdot m)+d(h_n\cdot m,m)<l(g)$ for sufficiently large $n$, which is a contradiction.
\end{proof}

\begin{remark}
Without the bound $l_{\min}$ in Lemma $\ref{basic-convergence}$ $(1)$, one could take a sequence of hyperbolic elements $\{g_n\}$ of translation length $\frac{1}{n}$ which converges to an elliptic element of $\isomt$.
\end{remark}

We obtain the following non-archimedean analogue of \cite[Lemma 2]{J} as a consequence of \Cref{basic-convergence} $(1)$. Note that this version does not use \Cref{Jorg}, contrasting with the proof in \cite{J} which requires J{\o}rgensen's inequality.

\begin{coro}\label{trace-constant}
Let $\{g_n\}$ be a sequence of elements of $\slk$ such that the cyclic groups $\langle g_n \rangle$ are discrete. If $\{g_n\}$ converges to an elliptic element $g \in \slk$, then the sequence $\{\tr(g_n)\}$ is eventually constant.
\end{coro}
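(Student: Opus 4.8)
The plan is to combine the convergence of translation lengths with the earlier structural results on finite order elements. Since each cyclic group $\langle g_n \rangle$ is discrete, \Cref{disc-criterion} shows that no $g_n$ can be an infinite order elliptic element; hence each $g_n$ is either hyperbolic or of finite order. The strategy is to first rule out hyperbolic behaviour in the tail of the sequence, and then use the finiteness result of \Cref{finitely-many} to pin down the trace.

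First I would establish that $\{g_n\}$ is eventually elliptic by applying \Cref{basic-convergence} $(1)$. To invoke this, I need a uniform lower bound $l_{\mathrm{min}}$ on the translation lengths of the hyperbolic elements appearing in the sequence. This is exactly where the discreteness of each $\langle g_n \rangle$ is used: by \Cref{TL}, any hyperbolic $g_n$ has translation length $l(g_n) = -2v(\tr(g_n)) \ge 2$, since a hyperbolic element satisfies $v(\tr(g_n)) < 0$ and $v$ is a \emph{discrete} valuation (so its negative values are bounded away from zero by the normalisation $v(\pi)=1$). Thus $l_{\mathrm{min}} = 2$ works uniformly across all the hyperbolic terms, and \Cref{basic-convergence} $(1)$ applies to give an $N$ such that $g_n$ is elliptic for all $n \ge N$.

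Next, for $n \ge N$ each $g_n$ is elliptic and, having finite order by the discreteness of $\langle g_n \rangle$ and \Cref{disc-criterion}, lies in the finite trace set described by \Cref{finitely-many}. Since $\{\tr(X) : X \in \slk \text{ has finite order}\}$ is a finite subset of $K$, it is in particular a discrete set, and the traces $\tr(g_n)$ all lie in this finite set for $n \ge N$. Because the entrywise convergence $g_n \to g$ forces $\tr(g_n) \to \tr(g)$, a convergent sequence taking values in a finite (hence discrete) set must be eventually constant. This yields the conclusion that $\{\tr(g_n)\}$ is eventually constant.

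I expect the main obstacle to be establishing the uniform lower bound $l_{\mathrm{min}}$ cleanly, since this is the one place where the discreteness hypothesis on the cyclic groups is genuinely essential and where the \emph{discreteness} of the valuation (as opposed to a general non-archimedean absolute value) is indispensable; over a non-discrete valuation, hyperbolic translation lengths could accumulate at $0$ and the argument would collapse, mirroring the warning in the remark following \Cref{basic-convergence}. The remaining steps are routine consequences of the finiteness of the trace set and continuity of the trace.
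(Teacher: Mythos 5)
Your proof is correct and follows essentially the same route as the paper: apply \Cref{basic-convergence}~$(1)$ to get eventual ellipticity, use \Cref{disc-criterion} to get finite order, and conclude via \Cref{finitely-many} and continuity of the trace; your explicit justification of the uniform bound $l_{\mathrm{min}}=2$ is a detail the paper leaves implicit. One small correction: that lower bound comes solely from the discreteness of the valuation (every hyperbolic element of $\slk$ has translation length at least $2$ by \Cref{TL}), not from the discreteness of the cyclic groups $\langle g_n\rangle$, which is genuinely used only in the later step converting ``elliptic'' into ``finite order''.
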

\begin{proof}
By \Cref{basic-convergence} $(1)$, the sequence $\{g_n\}$ is eventually elliptic. It follows from \Cref{disc-criterion} that $\{g_n\}$ eventually consists of finite order elliptic elements. Since the trace function is continuous, \Cref{finitely-many} shows that $\{\tr(g_n)\}$ is eventually constant.
\end{proof}

\begin{lem}\label{elementary-preserve}
Let $\{G_n\}$ be a sequence of elementary subgroups of $\isomt$. If $\{G_n\}$ converges algebraically to $G$, 
then $G$ is elementary.
\end{lem}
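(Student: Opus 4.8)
The plan is to argue by contradiction: assume that $G$ is non-elementary and show that $G_n$ must then be non-elementary for all sufficiently large $n$, contradicting the hypothesis. First I would apply \Cref{distinct-axes} to produce two hyperbolic elements $a,b \in G$ whose axes $\ax(a)$ and $\ax(b)$ have four pairwise distinct ends $\alpha^\pm,\beta^\pm$. By the definition of algebraic convergence there are elements $\alpha,\beta$ of the underlying group $\Gamma$ with $a=\lim_n \phi_n(\alpha)$ and $b=\lim_n \phi_n(\beta)$; writing $a_n=\phi_n(\alpha)$ and $b_n=\phi_n(\beta)$, \Cref{basic-convergence}$(2)$ guarantees that $a_n$ and $b_n$ are eventually hyperbolic, so after discarding finitely many terms I may assume they are hyperbolic for every $n$.

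The technical heart of the argument is to show that the ends of $\ax(a_n)$ converge to $\{\alpha^+,\alpha^-\}$ and those of $\ax(b_n)$ converge to $\{\beta^+,\beta^-\}$. Fixing a basepoint $x$, I would locate the nearest-point projection $y_n$ of $x$ onto $\ax(a_n)$ via the Gromov product $d(x,y_n)=\tfrac12\bigl(d(x,a_n\cdot x)+d(x,a_n^{-1}\cdot x)-d(a_n\cdot x,a_n^{-1}\cdot x)\bigr)$, which is continuous in $a_n$ (using that inversion is continuous for the topology of pointwise convergence, so $a_n^{-1}\cdot x\to a^{-1}\cdot x$). Since $a_n\cdot x\to a\cdot x$, this yields $d(x,y_n)\to d(x,y)$, where $y=\mathrm{proj}_{\ax(a)}(x)$, and hence $l(a_n)=d(x,a_n\cdot x)-2\,d(x,y_n)\to l(a)$ together with $y_n\to y$. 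Because $a_n^k\cdot y_n\to a^k\cdot y$ for each fixed $k\ge 0$, the initial segment of the axis ray from $y_n$ through $a_n\cdot y_n,a_n^2\cdot y_n,\dots$ agrees, for large $n$, with the corresponding segment of the ray from $y$ toward $\alpha^+$; since $l(a_n)\to l(a)>0$ these segments have unbounded length, which forces the ends to converge, $a_n^+\to\alpha^+$, and symmetrically $a_n^-\to\alpha^-$ and $b_n^\pm\to\beta^\pm$.

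Since the four limiting ends are distinct, the rays from $y$ to them pairwise agree only along bounded segments; choosing a length exceeding all of these, the convergence above shows that for large $n$ the four ends of $\ax(a_n)$ and $\ax(b_n)$ are themselves pairwise distinct. Finally I would observe that two hyperbolic elements whose axes have four distinct ends generate a non-elementary group: such a group contains a hyperbolic element and so fixes no vertex, it cannot fix a single end (that end would be a common end of both axes), and it cannot stabilise a pair of ends (the two axes would then coincide). Hence $\langle a_n,b_n\rangle\le G_n$ is non-elementary, so $G_n$ is non-elementary for all large $n$, which is the desired contradiction.

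I expect the main obstacle to be making the end-convergence in the second paragraph fully rigorous, particularly in the generality of an arbitrary $\Lambda$-tree, where the space of ends need not carry an obviously convenient topology. The safe route is to phrase everything in terms of the concrete axis rays $[y_n,a_n^{\pm})$ and their bounded initial segments, comparing them directly with the limiting rays from $y$, rather than invoking a visual topology on $\partial T$; the small complication that the basepoints $y_n$ vary is controlled by $y_n\to y$.
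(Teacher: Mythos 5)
Your proof is correct, but it takes a genuinely different route from the paper's. After the common opening (contradiction, \Cref{distinct-axes}, and \Cref{basic-convergence}$(2)$ to make $a_n,b_n$ eventually hyperbolic), the paper never touches the geometry of the approximating axes: it replaces $h_2$ by $h_2^{-1}$ if necessary so that $h_1,h_2$ translate the same way along their overlap, invokes Chiswell's Lemma 3.9 to produce a $k$ with $[h_1,h_2^k]$ hyperbolic, pushes the hyperbolicity of $h_1$, $h_2$ and $[h_1,h_2^k]$ down to $G_N$ via \Cref{basic-convergence}$(2)$, and then notes that a hyperbolic commutator forces $\ax(h_{1_N})$ and $\ax(h_{2_N})$ to have finite overlap --- impossible in an elementary group containing hyperbolic elements. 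The hyperbolicity of a single commutator thus serves as a finite algebraic certificate that the two axes are in ``general position,'' and it passes between $G$ and $G_N$ for free. Your argument instead establishes the stronger geometric fact that the axes (and their ends) of $a_n,b_n$ converge to those of $a,b$, which is more informative but costs you the delicate bookkeeping you identify: justifying $y_n\to y$ (cleanest via continuity of the median of $x$, $a_n\cdot x$, $a_n^{-1}\cdot x$, or by noting that in a locally finite simplicial tree pointwise convergence is eventual agreement, so $y_n=y$ for large $n$), and the step where ``segments of unbounded length'' force convergence of ends. The latter is unproblematic for $T_K$ and for $\mathbb{R}$-trees, but in the stated $\Lambda$-tree generality one cannot always exceed a fixed $L\in\Lambda$ by multiples of $l(a)$ (e.g.\ when $l(a)$ is infinitesimal relative to $L$), which is exactly the kind of issue the commutator argument sidesteps; since the paper's applications are all to the locally finite simplicial tree $T_K$, this is a caveat rather than a gap. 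Your closing observation that two hyperbolics with four pairwise distinct axis-ends generate a non-elementary group is correct and is implicitly the same endgame as the paper's.
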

\begin{proof}
Suppose for a contradiction that $G$ is non-elementary. By \Cref{distinct-axes}, there are hyperbolic elements $h_1, h_2 \in G$ such that the ends of $\ax(h_1)$ and $\ax(h_2)$ are distinct. Without loss of generality, we may assume that $h_1$ and $h_2$ translate in the same direction along the (possibly empty) finite path $\ax(h_1) \cap \ax(h_2)$. By Lemma 3.9 of \cite[Chapter 3]{Chis}, there is some positive integer $k$ such that $[h_1,h_2^k]$ is hyperbolic. By \Cref{basic-convergence} $(2)$, there is a sufficiently large positive integer $N$ such that the corresponding elements $h_{1_N}$, $h_{2_N}$ and $[h_{1_N},h_{2_N}^k]$ of $G_N$ are hyperbolic. Since $[h_{1_N},h_{2_N}^k]$ is hyperbolic, $\ax(h_{1_N})$ and $\ax(h_{2_N})$ have finite (or empty) overlap by the remark preceeding Proposition 3.7 of \cite[Chapter 3]{Chis}, which contradicts the fact that $G_N$ is elementary.
\end{proof}

The following is a non-archimedean analogue of \cite[Lemma 9]{JK}. Note that the assumption of eventual discreteness is necessary; see \Cref{approx-not-fix-end}.

\begin{lem}\label{approx-fixed-end}
Let $\{G_n=\langle g_n, h_n \rangle \}$ be a sequence of subgroups of $\slk$ converging algebraically to $G = \langle g,h \rangle \le \slk$, where $h$ is hyperbolic. If $\{G_n\}$ is eventually discrete, then $g$ fixes an end of $\ax(h)$ if and only if $g_n$ and $h_n$ fix a common end of $T_K$ for all sufficiently large $n$.
\end{lem}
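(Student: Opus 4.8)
The plan is to prove both implications through the trace criterion of \Cref{trace-comm-2}, which converts the geometric condition ``fix a common end'' into the arithmetic condition $\tr([\,\cdot\,,\,\cdot\,]) = 2$, and then to exploit the continuity of the commutator and trace maps under algebraic convergence. Since $h$ is hyperbolic, \Cref{basic-convergence}$(2)$ shows that $h_n$ is hyperbolic for all sufficiently large $n$; for such $n$ the only ends fixed by $h_n$ are the two ends of $\ax(h_n)$, so ``$g_n$ and $h_n$ fix a common end of $T_K$'' is equivalent to ``$g_n$ fixes an end of $\ax(h_n)$''. By the trace computation carried out in the proof of \Cref{trace-comm-2}, for a hyperbolic element having commutator trace $2$ with a second element is equivalent to that second element fixing one of the two ends of the axis (the two eigendirections yield the two triangular forms). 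Hence, for large $n$, $g_n$ and $h_n$ fixing a common end is equivalent to $\tr([g_n,h_n]) = 2$, while $g$ fixing an end of $\ax(h)$ is equivalent to $\tr([g,h])=2$. The lemma therefore reduces to showing that $\tr([g,h]) = 2$ if and only if $\tr([g_n,h_n]) = 2$ for all sufficiently large $n$.

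For the backward implication I would argue directly, without invoking discreteness. If $g_n$ and $h_n$ fix a common end for all large $n$, then $\tr([g_n,h_n]) = 2$ for these $n$ by the equivalence above. As the commutator and trace maps are polynomial, hence continuous on $\slk \subseteq K^4$, algebraic convergence gives $\tr([g,h]) = \lim_{n\to\infty}\tr([g_n,h_n]) = 2$, and \Cref{trace-comm-2} then shows that $g$ fixes an end of $\ax(h)$.

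The forward implication is where the main work lies, and where eventual discreteness is indispensable. Assuming $g$ fixes an end of $\ax(h)$, the same continuity argument only yields $\tr([g_n,h_n]) \to \tr([g,h]) = 2$; the obstacle is to upgrade this limit to the exact equality $\tr([g_n,h_n]) = 2$ for large $n$. To do this I would first observe that $[g,h]$ is elliptic: since $\tr([g,h]) = 2$ and $v(2) \ge 0$, \Cref{TL} gives $l([g,h]) = 0$. The sequence $\{[g_n,h_n]\}$ converges to this elliptic element, and each cyclic group $\langle [g_n,h_n]\rangle$ is discrete, being a subgroup of the eventually discrete group $G_n$. Thus \Cref{trace-constant} applies and shows that $\{\tr([g_n,h_n])\}$ is eventually constant; since it converges to $2$, this constant value must be $2$, so $\tr([g_n,h_n]) = 2$ for all large $n$. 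By the equivalence recorded in the first paragraph, $g_n$ and $h_n$ then fix a common end of $T_K$ for all sufficiently large $n$.

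The crux is therefore the passage from $\tr([g_n,h_n]) \to 2$ to $\tr([g_n,h_n]) = 2$: continuity alone is insufficient, and it is precisely the eventual discreteness of $\{G_n\}$ — which forces the eventually elliptic commutators $[g_n,h_n]$ to have finite order and hence only finitely many possible traces (\Cref{finitely-many}, as packaged into \Cref{trace-constant}) — that rules out a sequence of traces approaching but never equalling $2$. This matches the necessity of the discreteness hypothesis flagged before the statement (cf. \Cref{approx-not-fix-end}).
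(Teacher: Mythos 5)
Your proposal is correct and follows essentially the same route as the paper's proof: both directions are reduced via \Cref{trace-comm-2} to the condition $\tr([\,\cdot\,,\,\cdot\,])=2$, the backward direction uses continuity of the trace, and the forward direction uses \Cref{TL} together with \Cref{trace-constant} (applied to the discrete cyclic groups $\langle [g_n,h_n]\rangle$) to upgrade $\tr([g_n,h_n])\to 2$ to eventual equality. Your added remarks on where discreteness enters and on the two eigendirections in \Cref{trace-comm-2} are accurate elaborations of the same argument.
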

\begin{proof}
Suppose first that $g_n$ and $h_n$ fix a common end of $T_K$ for all sufficiently large $n$. By \Cref{trace-comm-2}, $\tr([g_n,h_n])=2$ for all sufficiently large $n$, and so $\tr([g,h])=2$ since the trace function is continuous. Hence \Cref{trace-comm-2} shows that $g$ fixes both ends of $\ax(h)$.

Conversely, suppose that $g$ fixes an end of $\ax(h)$. By \Cref{trace-comm-2}, $\tr([g,h])=2$ and hence $[g,h]$ is elliptic by \Cref{TL}. \Cref{trace-constant} then shows that $\tr([g_n,h_n])=2$ for all sufficiently large $n$. Since $h_n$ is hyperbolic for sufficiently large $n$ by \Cref{basic-convergence} $(2)$, another application of \Cref{trace-comm-2} proves the result.
\end{proof}

We will use the following technical lemma several times.
\begin{lem}\label{technical}
Let $g \in \slk$ and suppose that $h_1, h_2 \in \slk$ are hyperbolic elements whose axes have pairwise distinct ends. If both $\langle g,h_1 \rangle$ and $ \langle g,h_2 \rangle$ are elementary groups, then $g=\pm 1$.
\end{lem}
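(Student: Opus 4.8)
The plan is to translate the hypotheses into the action of $g$ on the end-set $\partial T_K \cong \mathbb{P}^1(K)$ and to exploit that a nonscalar element of $\slk$ fixes at most two points of $\mathbb{P}^1(K)$. Write $\{\eta_1^+,\eta_1^-\}$ and $\{\eta_2^+,\eta_2^-\}$ for the ends of $\ax(h_1)$ and $\ax(h_2)$; by hypothesis these are four distinct points of $\mathbb{P}^1(K)$.

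First I would analyse the two elementary groups. Since $h_i$ is hyperbolic, $\langle g,h_i\rangle$ cannot stabilise a vertex, so it either fixes a single end or stabilises a pair of ends. A hyperbolic isometry stabilises exactly one pair of ends and cannot interchange them (its square has the same axis), so in the pair case the stabilised pair must be $\{\eta_i^+,\eta_i^-\}$, and in the single-end case the fixed end lies in $\{\eta_i^+,\eta_i^-\}$. Thus $g$ either fixes one of $\eta_i^\pm$ or stabilises $\{\eta_i^+,\eta_i^-\}$ setwise. To obtain a uniform conclusion I would upgrade the single-end case using \Cref{one-implies-two}: if $g$ fixes one end of $\ax(h_i)$, then $g$ commutes with a power of $h_i$ and hence fixes both ends. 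Either way, $g$ stabilises the set $\{\eta_i^+,\eta_i^-\}$ setwise for $i=1,2$.

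Next I would combine the two constraints. As $g$ preserves each of the two disjoint pairs, it permutes the four distinct ends while respecting the partition, so $g^2$ fixes all four of $\eta_1^\pm,\eta_2^\pm$. An element of $\slk$ fixing three or more distinct points of $\mathbb{P}^1(K)$ must be $\pm 1$, since a nonscalar $2\times 2$ matrix has at most two eigenlines; hence $g^2=\pm 1$. It then remains to rule out the possibility that $g$ genuinely interchanges the ends of one (or both) axes, i.e. that $g$ is an orientation-reversing elliptic element with $g^2=-1$. Only once this is excluded does $g$ itself fix at least three of the four ends and therefore equal $\pm 1$.

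The main obstacle is precisely this last step: excluding the interchange case. The difficulty is genuine, because an element such as $\left(\begin{smallmatrix} 0 & -1\\ 1 & 0\end{smallmatrix}\right)$ is elliptic of order four with $g^2=-1$ and interchanges the ends of many axes, so the mere existence of two elementary two-generator subgroups does not by itself force $g=\pm 1$; the argument must fully exploit the geometry of the configuration (and, I expect, the discreteness supplied by \Cref{disc-criterion} together with the finite-order and trace restrictions of \Cref{finitely-many} and \Cref{TL}) to show that a swap is incompatible with $h_1,h_2$ having four distinct ends. Concretely, I would assume for contradiction that $g$ swaps $\eta_1^+\leftrightarrow\eta_1^-$, note that $g$ is then elliptic fixing the midpoint of $\ax(h_1)$ with $g^2=-1$, and attempt to derive a contradiction from the position of $\ax(h_2)$ relative to $\fix(g)$. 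This case analysis is where I expect the real work to lie.
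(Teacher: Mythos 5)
Your proposal reproduces the paper's argument exactly up to the point where you stop, and the obstacle you flag is real: the paper's own proof consists of the observation that $g$ stabilises the set of ends of each axis, so that $g^2$ fixes four distinct points of $\partial T_K\cong\mathbb{P}^1(K)$ and is therefore scalar, and it then simply writes ``$g^2=1$'' and asserts the conclusion. The interchange case you isolate is never addressed, and it cannot be: the statement as written is false. Over $K=\mathbb{Q}_5$, where $i=\sqrt{-1}\in\mathbb{Q}_5$ by Hensel's lemma, the element $g=\left(\begin{smallmatrix} i & 0\\ 0 & -i\end{smallmatrix}\right)\in\slk$ acts on $\mathbb{P}^1(K)$ as $z\mapsto -z$; taking $h_1,h_2$ hyperbolic with axes ending at $\{1,-1\}$ and $\{2,-2\}$ respectively, both $\langle g,h_1\rangle$ and $\langle g,h_2\rangle$ stabilise a pair of ends and are elementary, the four ends are pairwise distinct, and $g\neq\pm 1$ (indeed $g^2=-1$). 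There is an even cheaper failure coming from the single-end case, which both you and the paper treat too quickly: if $g$ is hyperbolic with $\ax(g)$ sharing one end with $\ax(h_1)$ and its other end with $\ax(h_2)$, then $\langle g,h_j\rangle$ fixes an end for each $j$ and is elementary, yet $g$ is hyperbolic. (Your proposed repair of the single-end case via \Cref{one-implies-two} is not available here, since that lemma assumes discreteness and \Cref{technical} does not.) What the argument genuinely proves, under the hypothesis that each $\langle g,h_j\rangle$ stabilises the \emph{pair} of ends of $\ax(h_j)$, is $g^2=\pm 1$, i.e.\ $g$ is trivial or an involution in $\pslk$.

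So the ``gap'' in your write-up is not a failure of your proof but a correct diagnosis of a flaw in the lemma: you should not expect to complete the case analysis, because the swap case occurs. The right move is to weaken the conclusion to $g^2=\pm 1$ (and to either strengthen ``elementary'' to ``stabilises the pair of ends of $\ax(h_j)$'' or handle the single-end configurations separately). This weaker conclusion is all the paper ever uses: in \Cref{disc-conv}, \Cref{psl-prop-2d}, \Cref{algconv1} and \Cref{algconv2} the lemma is applied to elements converging to $1$, whose traces tend to $2$, so the possibility $\tr(g)=0$ (equivalently $g^2=-1$) is eventually excluded and the sequences are still eventually central.
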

\begin{proof}
Since $\langle g,h_1 \rangle$ and $\langle g,h_2 \rangle$ are elementary, $g$ stabilises the ends of both $\ax(h_1)$ and $\ax(h_2)$. Hence $g^2$ fixes four ends of $T_K$ and, since $\partial T_K$ can be identified with the projective line $\mathbb{P}^1(K)$ \cite[p. 72]{S}, it follows that $g^2=1$.
\end{proof}

We conclude this section by proving a non-archimedean analogue of \cite[Proposition 1]{J}.

\begin{prop}\label{disc-conv}
Let $G$ be a non-elementary subgroup of $\slk$ and let $\{G_n\}$ be a sequence of eventually discrete subgroups of $\slk$. If $\{G_n\}$ converges algebraically to $G$, then $G$ is discrete. 
\end{prop}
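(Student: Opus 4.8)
The plan is to argue by contradiction, assuming that $G$ is not discrete and deriving a violation of J{\o}rgensen's inequality (\Cref{Jorg}) inside the eventually discrete groups $G_n$. Since $\slk$ is metrizable, non-discreteness of $G$ produces a sequence $\{f_k\}$ of distinct elements of $G$ with $f_k \to 1$; discarding finitely many terms, I may assume $f_k \neq \pm 1$ for all $k$. Because $G$ is non-elementary, \Cref{distinct-axes} supplies hyperbolic elements $h_1, h_2 \in G$ whose axes have pairwise distinct ends. By \Cref{technical}, for each $k$ at least one of $\langle f_k, h_1 \rangle$ or $\langle f_k, h_2 \rangle$ must be non-elementary (otherwise $f_k = \pm 1$). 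By the pigeonhole principle, after passing to a subsequence I may fix a single $h \in \{h_1, h_2\}$ so that $\langle f_k, h \rangle$ is non-elementary for infinitely many $k$, while still $f_k \to 1$.

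Next I would exploit the convergence $f_k \to 1$. Since the trace is continuous, $\tr(f_k) \to 2$ and $\tr([f_k,h]) \to 2$, so $v(\tr^2(f_k)-4) \to \infty$ and $v(\tr([f_k,h])-2) \to \infty$ (with the convention $v(0)=\infty$). Hence I can fix one index $k$ for which $\langle f_k, h\rangle$ is non-elementary and, writing $f := f_k$,
\[
\min\{v(\tr^2(f)-4),\ v(\tr([f,h])-2)\} > M_K .
\]
Choosing $\alpha,\beta\in\Gamma$ with $\phi_n(\alpha) = f_n \to f$ and $\phi_n(\beta) = h_n \to h$, the two-generator subgroups $\langle f_n,h_n\rangle = \phi_n(\langle \alpha,\beta\rangle)\le G_n$ converge algebraically to $\langle f,h\rangle$.

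The heart of the argument is then to transfer this data into $G_n$. Since $\langle f,h\rangle$ is non-elementary, the argument proving \Cref{elementary-preserve} shows that $\langle f_n,h_n\rangle$ is non-elementary, and in particular does not fix an end of $T_K$, for all sufficiently large $n$. As the $G_n$ are eventually discrete, $\langle f_n,h_n\rangle$ is then a discrete subgroup not fixing an end, so \Cref{Jorg} applies and yields $\min\{v(\tr^2(f_n)-4),\ v(\tr([f_n,h_n])-2)\} \le M_K$ for all large $n$. On the other hand $\tr^2(f_n)-4 \to \tr^2(f)-4$ and $\tr([f_n,h_n])-2 \to \tr([f,h])-2$; because $v$ is locally constant away from $0$ and tends to $\infty$ along sequences converging to $0$, the strict inequality $\min\{v(\tr^2(f)-4),\,v(\tr([f,h])-2)\}>M_K$ propagates to the approximants, giving $\min\{v(\tr^2(f_n)-4),\ v(\tr([f_n,h_n])-2)\} > M_K$ for all large $n$. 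This contradicts the inequality supplied by \Cref{Jorg}, so $G$ must be discrete.

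I expect the main obstacle to be the careful management of the two nested limits: one must first fix $k$ (equivalently the single element $f$ close to $1$) using $f_k \to 1$, and only afterwards let $n \to \infty$ within that one fixed two-generator group. The secondary points requiring care are the transfer of non-elementarity to $\langle f_n,h_n\rangle$ — which is precisely the mechanism inside the proof of \Cref{elementary-preserve} rather than its stated (contrapositive) form — and the behaviour of the valuation $v$ under limits, which causes no trouble here only because I need just the one-sided conclusion that $v$ stays large along the approximating sequences.
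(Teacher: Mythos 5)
Your argument is correct and rests on exactly the same ingredients as the paper's proof (\Cref{Jorg}, \Cref{distinct-axes}, \Cref{technical} and \Cref{elementary-preserve}); the paper merely runs the implications in the opposite direction, using the violated inequality to conclude that the approximating two-generator subgroups $\langle g_{i_n},h_{j_n}\rangle$ fix an end, pushing elementarity to the limit groups $\langle g_i,h_j\rangle$ via \Cref{elementary-preserve}, and then invoking \Cref{technical} to get $g_i=\pm 1$. Your contrapositive organization --- including the pigeonhole step, the transfer of non-elementarity to the approximants via (subsequences and) \Cref{elementary-preserve}, and the ultrametric propagation of the valuation bound --- is sound and essentially the same argument.
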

\begin{proof}
Let $\{g_i\}$ be a sequence of elements of $G$ converging to $1$. By Lemma \ref{distinct-axes}, we may choose hyperbolic elements $h_1, h_2 \in G$ whose axes have no end in common. Observe that the sequences $\{[g_i,h_1]\}$ and $\{[g_i,h_2]\}$ both converge to $1$. For sufficiently large $i$ and $n$, it follows that $\min\{v(\tr^2(g_{i_n})-4), \tr([g_{i_n},h_{j_n}])-2)\}>M_K$ for $j \in \{1,2\}$. \Cref{Jorg} hence implies that the subgroups $\langle g_{i_n}, h_{1_n} \rangle$ and $\langle g_{i_n}, h_{2_n}\rangle$ of $G_n$ are elementary for sufficiently large $i$ and $n$. By Lemma \ref{elementary-preserve}, the subgroups $\langle g_i, h_1 \rangle$ and $\langle g_i, h_2 \rangle$ of $G$ are also elementary for sufficiently large $i$. Hence $\{g_i\}$ is eventually constant by \Cref{technical}, so $G$ is discrete.
\end{proof}

\section{Proofs of the remaining statements}

\begin{proof}[Proof of \Cref{psl-prop-2d}]
Let $G$ be a non-elementary subgroup of $\slk$. If $G$ is discrete, then so is every subgroup of $G$. So suppose that every two-generator subgroup of $G$ is discrete, and that $\{g_i\}$ is a sequence of elements of $G$ converging to $1$. By \Cref{distinct-axes}, we may choose hyperbolic elements $h_1, h_2 \in G$ whose axes have pairwise distinct ends. For sufficiently large $i$, the following inequality holds for each $j \in \{1,2\}$:
\[ \min\{v(\tr^2(g_i)-4), v(\tr([g_i,h_j]-2)\}>M_K.\]
Hence \Cref{Jorg} shows that $\langle g_i, h_1\rangle$ and $\langle g_i, h_2\rangle$ are both elementary for sufficiently large $i$. By \Cref{technical}, $\{g_i\}$ is eventually constant and hence $G$ is discrete.
\end{proof}

\begin{proof}[Proof of \Cref{1d-criterion}]
If $G$ is discrete, then every cyclic subgroup of $G$ is discrete. Hence we may suppose that every cyclic subgroup of $G$ is discrete. Since $G$ acts by isometries on a locally finite simplicial building $X$ of type $\tilde{A}_{n-1}$ \cite[2.2.8 and 7.4.11]{BT72},  \Cref{disc-criterion} shows that every element in $G$ which fixes a vertex of $X$ has finite order. Thus every vertex stabiliser in $G$ is periodic, that is, it consists only of finite order elements. Moreover, the proof of \Cref{finitely-many} shows that every vertex stabiliser in $G$ has finite exponent.
Thus every vertex stabiliser in $G$ is finite by \cite[Theorem 9.1(ii) and (iii)]{Wehrfritz}, so $G$ is discrete by \Cref{disc-criterion}.
\end{proof}

\begin{proof}[Proof of \Cref{algconv1}]
We first prove that $\phi$ is an isomorphism. Since $\phi$ is surjective by construction, it suffices to show that $\phi$ is injective.

Let $A \in \Gamma$ be non-trivial. If $A$ is hyperbolic, then the proof of \Cref{distinct-axes} shows that we may choose a hyperbolic element $B \in \Gamma$ such that $\ax(A)$ and $\ax(B)$ have pairwise distinct ends. If $A$ is elliptic, then there exists a hyperbolic element $B \in \Gamma$ such that $A$ does not stabilise the set of ends of $\ax(B)$. In either case, there exists a hyperbolic element $B \in \Gamma$ such that $\langle A, B \rangle$ is non-elementary. By \Cref{non-elem-preserved}, $\langle \phi_n(A), \phi_n(B) \rangle$ is then a non-elementary subgroup of $G_n$ for sufficiently large $n$.

Now suppose for a contradiction that $\phi(A)=1$. The sequence $\{\phi_n(A)\}$ converges to $1$, so for sufficiently large $n$ we obtain $$\min\{v(\tr^2(\phi_n(A))-4),v(\tr([\phi_n(A),\phi_n(B)])-2)\}>M_K.$$ 
\Cref{Jorg} then implies that $\langle \phi_n(A), \phi_n(B) \rangle$ is elementary for sufficiently large $n$, which gives the desired contradiction. Hence $\phi$ is an isomorphism.

Now suppose that $\{g_i=\phi(\gamma_i)\}$ is a sequence of elements of $G$ converging to $1$. 
Since $G_n$ is isomorphic to $\Gamma$, \Cref{non-elem-preserved} shows that $G_n$ is non-elementary for sufficiently large $n$. For each such $n$, there exist hyperbolic elements $\phi_n(h_1), \phi_n(h_2) \in G_n$ whose axes have pairwise distinct ends by \Cref{distinct-axes}.
For sufficiently large $i$ and $n$, observe that 
$$\min\{v(\tr^2(\phi_n(\gamma_i)-4)), v(\tr([\phi_n(\gamma_i),\phi_n(h_j)])-2)\}>M_K$$ 
for $j \in \{1,2\}$. \Cref{Jorg} hence implies that the subgroups $\langle \phi_n(\gamma_i), \phi_n(h_1) \rangle$ and $\langle \phi_n(\gamma_i), \phi_n(h_2)\rangle$ of $G_n$ are elementary for sufficiently large $i$ and $n$.
\Cref{technical} thus shows that $\phi_n(\gamma_i)=\pm 1$ for sufficiently large $i$ and $n$. Since $\phi_n$ and $\phi$ are isomorphisms, it follows that $\{g_i=\phi(\gamma_i)\}$ is eventually 1, whence $G$ is discrete. Moreover, \Cref{non-elem-preserved} shows that $G$ is non-elementary.
\end{proof}

\begin{proof}[Proof of \Cref{algconv2}]
By \Cref{disc-conv}, if $G$ is non-elementary it is also discrete, so we start by proving the former.

For each $n$, one of the elements in the set $\{g_{i_n}, g_{i_n}g_{j_n} : 1\leq i<j \leq r\}$ must be hyperbolic as otherwise Lemma 2.1 of \cite[Chapter 4]{Chis} shows that $G_n$ fixes a point and is hence elementary. We will denote this hyperbolic element by $h_n$.
By Lemma \ref{good-one}, for each $n$ there exists $i_n \in \{1,\dots, r\}$ such that the group $H_n = \langle h_n, g_{i_n} \rangle$ is discrete and non-elementary. Since each $G_n$ is finitely generated, for infinitely many $n$ we must have picked the same indices to define both generators of $H_n$. Hence there exists a subsequence of $\{H_n\}$ converging to some two-generator subgroup $H$ of $G$. Since $G$ is non-elementary when $H$ is, it thus suffices to prove the statement for $r=2$. 

If every element of $\{g_1, g_2, g_1g_2\}$ is elliptic, then \Cref{basic-convergence} $(1)$ shows that every element of $\{g_{1_n}, g_{2_n}, g_{1_n}g_{2_n}\}$ is also elliptic for sufficiently large $n$. Lemma 2.1 of \cite[Chapter 4]{Chis} then shows that $G_n$ is elementary, which is a contradiction. Hence we may choose some hyperbolic element $h \in \{g_1, g_2, g_1g_2\}$. 

If $g \in \{g_1, g_2, g_1g_2\} \backslash \{h\}$ fixes an end of $\ax(h)$, then \Cref{approx-fixed-end} shows that $G_n$ is elementary for sufficiently large $n$, which is a contradiction. Therefore there is some $g \in \{g_1, g_2, g_1g_2\}\backslash\{h\}$ such that the hyperbolic elements $h$ and $ghg^{-1}$ have no ends in common, so $G$ is non-elementary (and hence discrete). 

To prove that the maps $\psi_n:g_i\mapsto g_{i_n}$ extend to surjective homomorphisms, we argue as in Theorem 2 of \cite{J}. Note that \Cref{finite-pres} shows that $G$ is finitely presented. Therefore a necessary and sufficient condition for $\psi_n$ to be a homomorphism is that the images of relators in $G$ are equal to the identity.

Let $R_1,\cdots,R_k$ be a basis for the relations for $G$. It remains to find a sufficiently large $N$ such that for $n\geq N$ the corresponding elements in $G_n$ are equal to the identity.  By Lemma \ref{distinct-axes}, we can find two hyperbolic elements $h_1, h_2 \in G$ whose axes have pairwise distinct ends. It follows from \Cref{basic-convergence} $(2)$ and \Cref{approx-fixed-end} that, for sufficiently large $n$, there exist hyperbolic elements $h_{1_n}, h_{2_n} \in G_n$ whose axes have pairwise distinct ends.

Since the sequence $\{G_n\}$ converges to $G$, the corresponding elements $R_{s_n}$ converge to $1$ for each $s\in\{1,\cdots,k\}$. By \Cref{Jorg}, the groups $\langle R_{s_n}, h_{t_n} \rangle$ are elementary for each $s\in\{1,\cdots,k\}$, $t\in \{1,2\}$ and sufficiently large $n$. Hence \Cref{technical} shows that $R_{s_n}=1$ for each $s\in\{1,\cdots,k\}$ and sufficiently large $n$.
\end{proof}

\begin{proof}[Proof of \Cref{dense}]
Let $G$ be dense in $\slqp$. Since $\slqp$ contains a two-generator dense subgroup, there exist $g,h\in\overline{G}$ which generate a dense subgroup $H$ of $\slqp$. Let $\{g_n\}$ and $\{h_n\}$ be sequences of elements of $G$ converging to $g$ and $h$ respectively. If all but finitely many of the subgroups $H_n =\langle g_n, h_n \rangle$ of $G$ are elementary, then $H$ is elementary by \Cref{elementary-preserve}. This is a contradiction by \cite[Lemma 6.6]{CS}, so we may assume without loss of generality that the sequence $\{H_n\}$ consists entirely of non-elementary subgroups of $G$. If all but finitely many of these subgroups are discrete, then $H$ is discrete by \Cref{algconv2}, which is again a contradiction. Hence there exists a positive integer $N$ such that $H_N$ is non-elementary and not discrete. It follows from Lemmas 6.6 and 6.7 of \cite{CS} that $H_N$ is dense.
\end{proof}

\section{Examples}

In this final section, we include several examples which are referred to throughout the paper.

\begin{exm}\label{elem-2d-exm}
Let $p$ be a prime and let $G$ be the subgroup of ${\rm SL_2}(\mathbb{F}_p((t)))$ generated by
$$X=\left\lbrace \left[ \begin{array}{cc}
1 & x \\ 
0 & 1 \end{array} \right] : x \in \mathbb{F}_p[[t]] \right\rbrace.$$
Each element of $X$ has order $p$ and fixes the vertex corresponding to $\mathbb{F}_p[[t]]^2$ in the relevant Bruhat-Tits tree. Since $X$ is infinite, $G$ is not discrete by Lemma $\ref{disc-criterion}$. However, every two elements of $X$ generate a finite (and hence discrete) group.
\end{exm}

\begin{exm}\label{alg-conv-exm}
We define the following matrices in $\slqp$:
$$A_n=\left[ \begin{array}{cc}
1 & p^n \\ 
0 & 1 \end{array} \right], \quad B=\left[ \begin{array}{cc}
p & 0 \\
1 &  \frac{1}{p} \end{array} \right], \quad C=\left[ \begin{array}{cc}
p & 0 \\
0 &  \frac{1}{p} \end{array} \right], \quad D_n=\left[ \begin{array}{cc}
1+p^n & 1 \\
0 &  \frac{1}{1+p^n} \end{array} \right].$$
Observe that:
\begin{itemize}
\item Each group $G_n=\langle A_n, B \rangle$ is non-elementary and not discrete, but $\{G_n\}$ converges algebraically to the elementary discrete group $\langle B \rangle \cong \mathbb{Z}$;
\item Each group $H_n=\langle A_n, C \rangle$ is elementary and not discrete, but $\{H_n\}$ converges algebraically to the elementary discrete group $\langle C \rangle \cong \mathbb{Z}$;
\item Each group $\langle D_n \rangle$ is elementary and discrete, but $\{ \langle D_n \rangle \}$ converges algebraically to an elementary group which is not discrete.
\end{itemize}
\end{exm}

\begin{exm}\label{tr-comm-2-ex}
Note that $-3$ is a quadratic residue modulo $7$, so Hensel's lemma shows that we can define the following matrices of ${\rm SL_2}(\mathbb{Q}_{7})$ which commute:
$$A=\left[ \begin{array}{cc}
0 & -1 \\ 
1 & 0 \end{array} \right], \quad B=\left[ \begin{array}{cc}
2 & -\sqrt{-3} \\
\sqrt{-3} &  2 \end{array} \right].$$
Since $\tr^2(A)-4=-4$ and $\tr^2(B)-4=12$ are both quadratic non-residues modulo $7$, the matrices $A$ and $B$ are not diagonalisable over $\mathbb{Q}_{7}$ and therefore fix no ends of $T_{\mathbb{Q}_7}$. 
\end{exm}

\begin{exm}\label{non-elem-iso}
Let ${\rm SL_2}(\mathbb{Z})\cong C_4 *_{C_2} C_6$ be the group generated by the matrices 
$$\left[ \begin{array}{cc}
0 & -1 \\ 
1 & 0 \end{array} \right] \text{ and } \left[ \begin{array}{cc}
0 & -1 \\ 
1 & 1 \end{array} \right];$$ see \cite[Chapter I, 1.5.3]{S}. Since ${\rm SL_2}(\mathbb{Z})$ fixes the vertex of $T_{\qp}$ corresponding to $\zp^2$ and also contains infinite order elliptic elements, it is an elementary non-discrete subgroup of $\slqp$.
On the other hand, the subgroup of $\slqp$ generated by the matrices 
$$\left[ \begin{array}{cc}
0 & -1 \\ 
1 & 0 \end{array} \right] \text{ and } \left[ \begin{array}{cc}
0 & -\frac{1}{p} \\ 
p & 1 \end{array} \right]$$ is discrete, non-elementary and also isomorphic to $C_4 *_{C_2} C_6$; for the corresponding quotient in $\pslk$, see \cite[Section 5, Case $(c)$]{CS}. 
\end{exm}

\begin{exm}\label{approx-not-fix-end}
We define the following matrices in $\slqp$:
$$
A_n=\left[ \begin{array}{cc}
1+p^n & 1 \\ 
p^n & 1 \end{array} \right], \quad
A=\left[ \begin{array}{cc}
1 & 1 \\ 
0 & 1 \end{array} \right], \quad
B=\left[ \begin{array}{cc}
p & 0 \\ 
0 & \frac{1}{p} \end{array} \right].$$
The sequence of non-discrete groups $\{\langle A_n,B\rangle\}$ converges algebraically to $\langle A,B \rangle$. Each elliptic element $A_n$ does not fix an end of $\ax(B)$, whereas $A$ does fix an end of $\ax(B)$.
\end{exm}

{\bf Acknowledgements} The first and third author are supported by the New Zealand Marsden Fund. The first author is also supported by the Rutherford Foundation. We thank Pierre-Emmanuel Caprace for pointing out the result by Breuillard and Gelander.


\begin{thebibliography}{99}
\bibitem{AP} J. V. Armitage and J. R. Parker, J{\o}rgensen's inequality for non-Archimedean metric spaces, {\it Birkh{\"a}user Basel} (2008), 97--111.

\bibitem{Beardon} A. Beardon, {\it The geometry of discrete groups}, Graduate Texts in Mathematics, {\bf 91}. Springer-Verlag, New York (1995).

\bibitem{B} N.~Bourbaki, {\it Elements of Mathematics: General Topology. Part 2}, Hermann (1966), iv+363 pp.

\bibitem{BH} M.~Bridson and A.~Haefliger, {\it Metric spaces of non-positive curvature}, Grundlehren der mathematischen Wissenschaften, 319, Springer-Verlag, Berlin, 1999.

\bibitem{BG} E. Breuillard and T.~Gelander, On dense free subgroups of Lie groups, {\it J. Algebra} {\bf 261}:2 (2003), 448--467.

\bibitem{BT72} F.~Bruhat and J.~Tits, Groupes r\'eductifs sur un corps local, I. Donn\'ees radicielles valu\'ees, {\it Publ. Math. IH\'ES} {\bf 41} (1972), 5--251.

\bibitem{CD} P. E. Caprace and T. De Medts, Simple locally compact groups acting on trees and their germs of automorphisms, {\it Transform. Groups.} {\bf 16}:2 (2011), 375--411 

\bibitem{Chis} I. Chiswell, {\it Introduction to $\Lambda$-trees}, World Scientific Publishing Co. (2001).

\bibitem{CS} M. J. Conder and J. Schillewaert, Discrete two-generator subgroups of $\mathrm{PSL}_2$ over non-archimedean local fields, \url{https://arxiv.org/abs/2208.12404.}

\bibitem{Gromov} M. Gromov, Hyperbolic groups, in: {\it Essays in Group Theory}, Math. Sci. Res. Inst. Publ. {\bf 8}, Springer (1987), 75--263.

\bibitem{J} T. J{\o}rgensen, On discrete groups of M\"obius transformations, {\it Amer. J. Math.} {\bf 98} (1976), no. 3, 739--749.

\bibitem{J2} T. J{\o}rgensen, A note on subgroups of $\slc$, {\it Quart. J. Math. Oxford Ser. (2)} {\bf 28}:110 (1977), 209--211.

\bibitem{JKi} T. J{\o}rgensen and M. Kiikka, Some extreme discrete groups, {\it Ann. Acad. Sci. Fenn. Ser. A I Math.} {\bf 1}:2 (1975), 245--248.

\bibitem{JK} T. J{\o}rgensen and P. Klein, Algebraic convergence of finitely generated Kleinian groups, {\it Quart. J. Math. Oxford Ser. (2)} {\bf 33}:131 (1982), 325--332.
\bibitem{K} F. Kato, Non-Archimedean orbifolds covered by Mumford curves, {\it J. Algebraic Geom.} {\bf 14}:1 (2005), 1--34.

\bibitem{Kramer} L. Kramer, Some remarks on proper actions, proper metric spaces, and buildings, {\it Adv. Geom.} {\bf 22}:4 (2022), 541--559.

\bibitem{Ku} M. Kuranishi, On everywhere dense embedding of free groups in Lie groups, {\it Nagoya
Math. J} {\bf 2} (1951), 63--71.

\bibitem{L} A. Lubotzky, Lattices of minimal covolume in {${\rm SL}_2$}: A nonarchimedean analogue of Siegel's Theorem $\mu \ge \pi/21$, {\it J. Amer. Math. Soc.} {\bf 3}:4 (1990), 961-975.

\bibitem{MS} J. W. Morgan and P. B. Shalen, Valuations, trees, and degenerations of hyperbolic structures I, {\it Ann. of Math. (2)} {\bf 120}:3 (1984), 401--476.

\bibitem{QYY} W. Y. Qiu, J. H. Yang and Y. C. Yin, The discrete subgroups and J{\o}rgensen's inequality for ${\rm SL}(m,\mathbb{C}_p)$, {\it Acta Math. Sin.} {\bf 29}:3 (2013), 417--428.

\bibitem{Scott} G. P. Scott, Finitely generated 3-manifold groups are finitely presented. {\it J. London Math. Soc.} (2) 6 (1973), 437--440.

\bibitem{S} J.-P. Serre, {\it Trees} (translated by J. Stillwell), Springer-Verlag (1980).

\bibitem{S-loc} J.-P. Serre, {\it Local Fields} (translated by M. J. Greenberg), Springer-Verlag (1979).


\bibitem{Tits77} J. Tits, A theorem of Lie-Kolchin for trees, {\it Contributions to algebra: a collection of papers dedicated to Ellis-Kolchin} (Academic Press, New York) (1977), 377-388.

\bibitem{Wehrfritz} B. Wehrfritz, {\it Infinite Linear Groups}, Springer-Verlag, New York, (1973).

\end{thebibliography}
\end{document}